\newtheorem{theorem}{Theorem}[section]
\newtheorem{lemma}[theorem]{Lemma}
\newtheorem{proposition}[theorem]{Proposition}
\newtheorem{remark}[theorem]{Remark}
\numberwithin{equation}{section}
\newcommand{\Ai}{\text{Ai\,}}
\newcommand{\re}{\text{Re\,}}
\begin{document}
\setcounter{page}{1}

\thanks{Supported by the Swedish  
Research Council (VR) and grant KAW 2010.0063 from the Knut and Alice Wallenberg
Foundation}

\title[Non-colliding Brownian motions and the tacnode]
{Non-colliding Brownian motions and the extended tacnode process}
\author[K.~Johansson]{Kurt Johansson}

\address{
Department of Mathematics,
KTH Royal Institute of Technology,
SE-100 44 Stockholm, Sweden}

\email{kurtj@kth.se}

\begin{abstract}

We consider non-colliding Brownian motions with two starting points and
two endpoints. The points are chosen so that the two groups of Brownian motions
just touch each other, a situation that is referred to as a tacnode. The extended
kernel for the determinantal point process at the tacnode point is computed using
new methods and given in a different form from that obtained for a single time in previous work
by Delvaux, Kuijlaars and Zhang. The form of the extended kernel is also
different from that obtained for the extended tacnode kernel in another model
by Adler, Ferrari and van Moerbeke. We also obtain the correlation kernel 
for a finite number of non-colliding Brownian motions starting at
two points and ending at arbitrary points.

\end{abstract}

\maketitle

\section{Introduction and results}\label{sect1}
\subsection{Introduction}\label{sect1.1}
Non-colliding Brownian motions have been 
much studied since they give rise to interesting classes
of determinantal point processes, see e.g. \cite{KM59}, \cite{FN98}, \cite{Jo01}, \cite{Jo05} and \cite{KT07}. 
Scaling limits of these point processes lead to universal limiting
determinantal point processes which also occur in other contexts, e.g. random matrix theory, 
random growth models and certain random tiling (dimer) models. In the latter cases, though we are typically
dealing with non-colliding discrete random walk type models, we can expect continuum
scaling limits for the discrete models to be the same as those
for non-colliding Brownian motions.
Therefore this serves as a natural test model.
The basic point processes obtained are the sine-kernel process (see e.g. \cite{BH97}, \cite{Jo01}, \cite{OR03}),
the Airy kernel process (see e.g. \cite{Fo93}, \cite{PS02}, \cite{OR03}, \cite{Jo03}, \cite{FS03}, \cite{AvMD08}), and the Pearcey
kernel process (see e.g.  \cite{BH98}, \cite{AvM05}, \cite{OR07}, \cite{TW06}, \cite{AOvM10}), and their extended versions. 
If we consider a global asymptotic regime in which the Brownian paths are confined
to a certain geometrical shape, the sine kernel process appears in the bulk (i.e. in the interior of the shape),
the Airy kernel process appears at a typical boundary point of the shape, and the Pearcey process
appears when we have a cusp.
In this paper we investigate a new limiting determinantal
point process that has only been studied recently called the {\it tacnode point process}. This type of process
can be obtained in situations where we have several starting and several endpoints for non-colliding
Brownian motions, see \cite{DK08}, \cite{DK09}, \cite{AFvM08}, \cite{AvMV10}. 

Consider $n$ Brownian motions starting at $0$ at time $t=0$ and ending at $0$ at time $t=1$.
At time $t$ they will lie approximately between $\pm\sqrt{4nt(1-t)}$. Thus, if we have two groups of non-colliding 
Brownian motions in the time interval $0\le t\le 1$, one group of $n$ particles starting and ending at $\sqrt{n}$ and the other,
also with $n$ particles,
starting and ending at $-\sqrt{n}$, then at time $t=1/2$ the two groups will just touch each other at the origin. If
we rescale by $\sqrt{n}$ in the vertical direction, then for large $n$ the two groups of non-colliding Brownian motions
will form ellipses which are tangent at the origin. We can think of this as two colliding Airy processes and we can
expect the same scalings as for the Airy process.
This situation is called a tacnode and the new limiting determinantal
point process that we expect to see in a neighbourhood of the origin is called the tacnode point
process. This point process is determinantal with a kernel that is more complicated than
the sine, Airy and Pearcey kernels. It is not expressible as a single or double contour integral
with elementary functions.
For Brownian motions the tacnode process has been analyzed recently at a single time by Delvaux, Kuijlaars and
Zhang, \cite{DKZ10}, using a $4\times 4$ Riemann-Hilbert problem. In another type of model, which involves, instead of
Brownian motions, Markov chains with discrete space and continuous time, similar to those that occur for
the polynuclear growth model, \cite {PS02}, Adler, Ferrari and van Moerbeke, \cite{AFvM10} have obtained an extended tacnode kernel.
If we consider this kernel at a single time we expect it, by universality, to be the same as that obtained in
\cite{DKZ10}. However the expressions obtaained are quite different and it is not immediate that they give rise
to the same correlation kernels, although we expect this to be the case. In this paper we obtain the
extended tacnode kernel for the Brownian motion model. The approach is not the same as those in \cite{DKZ10} or in
\cite{AFvM10}. It does not use the Riemann-Hilbert method or orthogonal polynomials, but just as in
\cite{AFvM10} Toeplitz determinants and the Geronimo-Case/Borodin-Okounkov identity enter into the
computations. The formula found for the extended tacnode kernel, see (\ref{1.26}) below, has similarities with that obtained
in \cite{AFvM10}, but we have not so far been able to show directly that they give rise to the same correlation functions.
Indirectly, that follows from the analysis in \cite{AJvM11}. Although the formulas involve similar objects, the structure
of the expressions is different. Also, for a single time we cannot directly relate
the kernel to that obtained in \cite{DKZ10}, although, since they concern the same model, we know that
they must define the same correlation functions. 

As in previously obtained scaling limits from non-colliding Brownian motions we expect the
tacnode process to be a natural, universal, scaling limit that should occur also in other contexts.
In joint work in progress with Adler and van Moerbeke we will show that the tacnode process
can be obtained in a random domino model called the double Aztec diamond, \cite {AJvM11}, a
certain extension of the classical Aztec diamond random tiling model. Note that there is also another type of
tacnode limit that has been studied, see \cite{BD10}. In this paper we furthermore obtain formulas for the correlation
kernel, see (\ref{1.14}), when the non-colliding Brownian motions can be divided into two groups, $n$ starting at a
point $a_1$ and ending at arbitrary points, and $m$ starting at a point $a_2$ and ending at arbitrary points.
This generalizes the much studied case when we have a single starting point and arbitrary endpoints,
which has found interesting applications in random matrix theory, see e.g. \cite{BH97}, \cite{Jo01}, \cite{Er10}.

\bigskip
{\bf Acknowledgement.} I thank Pierre van Moerbeke for drawing my attention to the tacnode problem for
non-colliding Brownian motions and Mark Adler and Pierre van Moerbeke for discussions. My sincere thanks 
to Arno Kuijlaars
for helpful discussions and encouragement, as well as for pointing out the usefulness of the Geronimo-Case/Borodin-Okounkov
formula in the present setting. Thanks also to Anthony Metcalfe for comments. I would also like to thank an anonymous
referee for very detailed reading and helpful comments.
Part of this work was carried out while participating in the Random Matrices and Applications
program at MSRI during the fall of 2010. I thank MSRI for hospitality and support.

\subsection{Results}\label{sect1.2}
Consider $N$ non-colliding Brownian particles starting at \linebreak 
$\mu_1,\dots,\mu_N$ at time 0 and ending
at $\nu_1,\dots,\nu_N$ at time 1. We assume that $\mu_1<\dots<\mu_N$ and $\nu_1<\dots<\nu_N$.
Let $(t_r, x_j^{(r)})$, $r=1,\dots,\ell$, $1\le j\le N$,
be the configuration of the particles at times $t_1<t_2<\dots<t_{\ell}$, i.e. at time $t_r$ 
the particles are at the positions $x_1^{(r)},\dots,x_N^{(r)}$. This forms an (extended) 
determinantal point process with kernel, \cite{EM97}, \cite{Jo03}, \cite{Jo05},
\begin{equation}\label{1.1}
\mathcal{L}(s,u,t,v;\mu,\nu)=-p_{t-s}(u,v)+\sum_{j,k=1}^Np_{1-s}(u,\nu_k)(A^{-1})_{kj}p_t(\mu_j,v),
\end{equation}
where
\begin{equation}
p_t(x,y)=\frac{1}{\sqrt{2\pi t}}e^{-(x-y)^2/2t}
\notag
\end{equation}
if $t>0$, $p_t(x,y)=0$ if $t\le 0$, and
\begin{equation}
A=(p_1(\mu_i,\nu_j))_{1\le i,j\le N}.
\notag
\end{equation}
Fix $a_1<a_2$ and set $a=a_2-a_1$. Also fix a large integer $K$. Choose the starting points
\begin{equation}\label{1.1'}
\mu_j=
\begin{cases}
a_1+a(j-1)/K, &\text{if $1\le j\le n$,}\\
a_2+a(j-1)/K, &\text{if $n<j\le n+m$,}\\
\end{cases}
\end{equation}
where $N=n+m$. If we let $K\to\infty$ this will approach two starting points at $a_1$ and $a_2$. 
We assume that we have chosen our coordinate system so that
\begin{align}\label{1.1''}
b_1&=\max_{1\le j\le n}\nu_j<0,
\notag\\
b_2&=\min_{n<j\le n+m}\nu_j>0.
\end{align}
Let $\tilde{\mathcal{L}}_{K,m,n}(s,u,t,v;\nu)$ denote the kernel 
(\ref{1.1}) with this choice of initial and final points.
We will instead consider the kernel
\begin{equation}\label{1.2}
\mathcal{L}_{K,m,n}(s,u,t,v;\nu)=
\exp\left(\frac{u^2}{2(1-s)}-\frac{v^2}{2(1-t)}\right)\tilde{\mathcal{L}}_{K,m,n}(s,u,t,v;\nu),
\end{equation}
which gives the same correlation functions. Set
\begin{equation}\label{1.3}
q(s,u,t,v)=\exp\left(\frac{u^2}{2(1-s)}-\frac{v^2}{2(1-t)}\right)p_{t-s}(u,v).
\end{equation}
We want to take the limit of $\mathcal{L}_{K,m,n}(s,u,t,v;\nu)$ as $K\to\infty$. This will give the correlation
kernel for a process that we can interpret as $n$ Brownian particles starting at $a_1$ at time 0 and ending
at $\nu_1,\dots,\nu_m$ at time 1, together with $m$ Brownian particles starting at $a_2$ at time 0
and ending at $\nu_{n+1},\dots, \nu_{n+m}$ at time 1, conditioned not to collide during the 
time interval $(0,1)$. The asymptotic result is given in theorem \ref{thm1.2} and the limiting kernel, $\mathcal{L}_{m,n}(s,u,t,v;\nu)$
is defined by (\ref{1.14}).

\begin{remark}\label{rem1.1} \rm ({\bf Notation for contours of integration}) \rm In this paper we will need various contours
of integration. We collect the definitions here for reference. Given $c\in\mathbb{R}$  
let $\Gamma_c$ denote the contour $t\to c+it$, $t\in\mathbb{R}$, i.e. a vertical line through $c$. Let
$C_s$ denote a circle with radius $s$ and center at the origin, and let $D_1$ and $D_{-1}$ denote circles
with radii $<1$ and centers at $1$ and $-1$ respectively.
Also, let $\gamma_1$ denote a simple closed contour in the left half plane, $\re z<0$, that contains
$\nu_1,\dots,\nu_n$ in its interior, and let $\gamma_2$ denote
a simple closed contour in the right half plane, $\re z>0$, that contains
$\nu_{n+1},\dots,\nu_{n+m}$ in its interior. Furthermore, given $w$ with $\re w<0$, let 
$\gamma_{1,w}$ denote a simple closed contour in the left half plane that contains
$\nu_1,\dots,\nu_n$ and $w$ in its interior. Similarly, given $w$ with $\re w>0$, let $\gamma_{2,w}$ 
be a simple closed contour in the right half plane that contains
$\nu_{n+1},\dots,\nu_{n+m}$ and $w$ in its interior.
\it
\end{remark}

Let us first give the precise definition of the limiting kernel $\mathcal{L}_{m,n}(s,u,t,v;\nu)$
that we will obtain. Before we can do 
that we have to define some functions that we need in the definition.
Fix two numbers $d_1$ and $d_2$ that can depend on the parameters of the problem. 
They will be chosen appropriately when we take scaling limits, see
(\ref{1.18}). Set
\begin{align}\label{1.4}
\mathcal{B}_{v,t}(x)=\frac{d_2\sqrt{a}}{(2\pi i)^2\sqrt{1-t}}&\int_{\Gamma_c}\,dw\int_{\gamma_1}\,d\zeta
e^{\frac{t}{2(1-t)}w^2+a_1w-\frac{vw}{1-t}}
\notag\\
&\prod_{j=1}^n\frac{w/\nu_j-1}{\zeta/\nu_j-1}
\prod_{j=1}^m(1-\zeta/\nu_{n+j})\frac{e^{ax\zeta}}{\zeta-w},
\end{align}
where $c\ge 0$, and
\begin{equation}\label{1.5}
\beta_{v,t}(x)=\frac{d_2\sqrt{a}}{2\pi i\sqrt{1-t}}\int_{\Gamma_0}\,dw
e^{\frac{t}{2(1-t)}w^2+a_1w-\frac{vw}{1-t}+axw}
\prod_{j=1}^m(1-w/\nu_{n+j}).
\end{equation}
Also set,
\begin{align}\label{1.6}
\mathcal{C}_{u,s}(y)=\frac{d_1\sqrt{a}}{(2\pi i)^2\sqrt{1-s}}&\int_{\gamma_1}\,dz\int_{\gamma_2}\,d\omega
e^{-\frac{s}{2(1-s)}z^2-a_1z+\frac{uz}{1-s}}
\notag\\
&\times\prod_{j=1}^n\frac{\omega/\nu_j-1}{z/\nu_j-1}
\prod_{j=1}^m\frac{1}{1-\omega/\nu_{n+j}}\frac{e^{-ay\omega}}{\omega-z},
\end{align}
and
\begin{equation}\label{1.7}
M_0(x,y)=\frac{a}{(2\pi i)^2}\int_{\gamma_1}\,d\zeta\int_{\gamma_2}\,d\omega e^{ax\zeta-ay\omega}
\frac 1{\zeta-\omega}
\prod_{j=1}^n\frac{1-\omega/\nu_j}{1-\zeta/\nu_j}
\prod_{j=1}^m\frac{1-\zeta/\nu_{n+j}}{1-\omega/\nu_{n+j}}.
\end{equation}
Furthermore, we define
\begin{align}\label{1.12.3}
\hat{\mathcal{B}}_{v,t}(x)=\frac{d_2\sqrt{a}}{(2\pi i)^2\sqrt{1-t}}&\int_{\Gamma_c}\,dw\int_{\gamma_2}\,d\zeta
e^{\frac{t}{2(1-t)}w^2+a_2w-\frac{vw}{1-t}}
\notag\\
&\prod_{j=1}^m\frac{w/\nu_{n+j}-1}{\zeta/\nu_{n+j}-1}
\prod_{j=1}^n(\zeta/\nu_{j}-1)\frac{e^{-ax\zeta}}{\zeta-w},
\end{align}
where $c\le 0$, and
\begin{equation}\label{1.12.4}
\hat{\beta}_{v,t}(x)=\frac{d_2\sqrt{a}}{2\pi i\sqrt{1-t}}\int_{\Gamma_0}\,dw
e^{\frac{t}{2(1-t)}w^2+a_2w-\frac{vw}{1-t}-axw}
\prod_{j=1}^n(w/\nu_{j}-1).
\end{equation}
Also set,
\begin{align}\label{1.12.5}
\hat{\mathcal{C}}_{u,s}(y)=\frac{d_1\sqrt{a}}{(2\pi i)^2\sqrt{1-s}}&\int_{\gamma_2}\,dz\int_{\gamma_1}\,d\omega
e^{-\frac{s}{2(1-s)}z^2-a_2z+\frac{uz}{1-s}}
\notag\\
&\times\prod_{j=1}^m\frac{1- \omega/\nu_{n+j}}{1-z/\nu_{n+j}}
\prod_{j=1}^n\frac{1}{\omega/\nu_{j}-1}\frac{e^{ay\omega}}{z-\omega},
\end{align}
and
\begin{equation}\label{1.12.6}
\hat{M}_0(x,y)=\frac{a}{(2\pi i)^2}\int_{\gamma_2}\,d\zeta\int_{\gamma_1}\,d\omega e^{-ax\zeta+ay\omega}
\frac 1{\omega-\zeta}
\prod_{j=1}^m\frac{1-\omega/\nu_{n+j}}{1-\zeta/\nu_{n+j}}
\prod_{j=1}^n\frac{1-\zeta/\nu_{j}}{1-\omega/\nu_{j}}.
\end{equation}

To see that the expressions below are well defined we need the following lemma,
which will be proved in section \ref{sect4}.
\begin{lemma}\label{lem1.1}
The kernels $M_0(x,y)$ and $\hat{M}_0(x,y)$ are of finite rank on $L^2[1,\infty)$. Furthermore
$\det(I-M_0)_{L^2[1,\infty)}>0$ and $\det(I-\hat{M}_0)_{L^2[1,\infty)}>0$.
\end{lemma}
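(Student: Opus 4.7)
The plan is to separate the two claims. The finite-rank statement follows from a residue calculation: in (1.7), the $\zeta$-integrand is meromorphic with singularities inside $\gamma_1$ only at the simple poles $\zeta = \nu_j$ coming from $\prod_{j=1}^n(1-\zeta/\nu_j)^{-1}$ (the factor $1/(\zeta-\omega)$ is regular there because $\omega\in\gamma_2$ lies in the opposite half-plane). Evaluating the $\zeta$-integral by residues gives
\[
M_0(x,y) = \sum_{j=1}^n e^{ax\nu_j}\,\psi_j(y),
\]
and a further residue evaluation of each $\psi_j$ in $\omega$ at the poles $\omega=\nu_{n+k}$ expresses it as a linear combination of $e^{-ay\nu_{n+k}}$. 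Since $\nu_j<0$ and $\nu_{n+k}>0$, both families lie in $L^2[1,\infty)$, so $M_0$ has rank at most $\min(n,m)$ on $L^2[1,\infty)$ and is, in particular, trace class. The argument for $\hat M_0$ is symmetric under swapping the two groups.

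For the strict positivity $\det(I-M_0)_{L^2[1,\infty)}>0$, I would use the Geronimo--Case/Borodin--Okounkov identity, which the introduction advertises as central to the paper's approach. The strategy is to recast the finite-$K$ Eynard--Mehta formula (\ref{1.1})--(\ref{1.2}) in the coalescence limit $K\to\infty$ in a form that isolates a Toeplitz determinant $D_N(\phi)$ with an explicit symbol $\phi$ built from the contour data in (\ref{1.4})--(\ref{1.7}); the GC--BO identity then converts this to
\[
D_N(\phi) = D_\infty(\phi)\cdot \det(I-M_0)_{L^2[1,\infty)},
\]
so that $\det(I-M_0)$ becomes the ratio $D_N(\phi)/D_\infty(\phi)$. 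The numerator is a Karlin--McGregor normalization constant for $N$ non-colliding Brownian paths with the prescribed distinct endpoints $\nu_1<\dots<\nu_{n+m}$, hence strictly positive. The denominator is the exponential of an absolutely convergent Szego-type series, hence also strictly positive. Positivity of $\det(I-M_0)$ then follows, and the same argument with the two groups interchanged handles $\hat M_0$.

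The finite-rank step is a mechanical residue computation, so the main obstacle is the positivity. The difficulty is that $M_0$ is not visibly the correlation kernel of a determinantal point process --- it is neither self-adjoint nor obviously a contraction --- so the standard gap-probability route to $\det(I-M_0)\ge 0$ is not available. A pure deformation argument starting from the trivial case $n=0$ or $m=0$, where $M_0\equiv 0$ and the determinant equals $1$, would reduce the problem to ruling out any failure of invertibility of $I-M_0$ along the deformation path, which is essentially the full content of the claim. Realizing $\det(I-M_0)$ as a ratio of manifestly positive Toeplitz-type determinants via GC--BO therefore appears to be the cleanest route.
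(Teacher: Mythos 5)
The finite-rank argument is correct and coincides with the paper's: you evaluate the double contour integral (\ref{1.7}) by residues at the poles $\zeta=\nu_j$ ($1\le j\le n$) and $\omega=\nu_{n+k}$ ($1\le k\le m$), obtaining an expansion of $M_0$ into a finite sum of rank-one kernels of the form $e^{ax\nu_j}e^{-ay\nu_{n+k}}$, which lie in $L^2[1,\infty)$ since the $\nu_j$ are negative and the $\nu_{n+k}$ are positive.

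For the positivity claim your high-level strategy (apply the Geronimo--Case/Borodin--Okounkov identity and recognize the resulting Toeplitz determinant as something manifestly positive) is indeed the route taken in the paper, but as you have written it the argument has a gap. First, a small confusion: the relevant Toeplitz determinant is $D_K[g_K(\zeta)]$, of size $K$ (the discretization parameter), not $N=n+m$; moreover the symbol $g_K$ itself depends on $K$, so there is no single fixed symbol $\phi$ with an identity $D_N(\phi)=D_\infty(\phi)\det(I-M_0)$. What one actually gets from GCBO is, for each $K$, an identity $\det(I-\mathcal{K}_0)_{\bar\ell^2(K)}=D_K[g_K]/e^{Z_K}$ where $e^{Z_K}$ is the Szeg\H{o}-type constant for the $K$-dependent symbol, and then $\det(I-M_0)_{L^2[1,\infty)}=\lim_{K\to\infty}\det(I-\mathcal{K}_0)_{\bar\ell^2(K)}$. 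The second and more serious issue is that knowing both $D_K[g_K]$ and $e^{Z_K}$ are strictly positive for each $K$ only gives $\det(I-M_0)\ge 0$ in the limit; a priori the ratio could tend to $0$. Indeed here $e^{Z_K}=\prod_{i,j}(1-\gamma_i\delta_j)^{-1}$ grows like a constant times $K^{mn}$ as $K\to\infty$, so one must show that $D_K[g_K]$, equivalently the Schur polynomial $s_{\langle K^m\rangle}(x)$, grows at the same rate. The paper supplies precisely this quantitative lower bound via the combinatorial (semistandard Young tableau) formula $s_{\langle K^m\rangle}(x)=\sum_{T}x^T\ge e^{am\nu_1}s_{\langle K^m\rangle}(1^{n+m})\sim cK^{mn}$ with $c>0$, from which $\liminf_K K^{-mn}s_{\langle K^m\rangle}(x)>0$ and hence $\det(I-M_0)>0$. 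Your invocation of Karlin--McGregor positivity explains why $D_K[g_K]>0$ for each $K$, which is consistent in spirit, but without the matching $K^{mn}$ lower bound you have not ruled out that the limit degenerates to zero. You should either supply such a lower bound (the tableau formula gives it cleanly) or find an alternative argument controlling the $K\to\infty$ rate.
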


We can now define $L_{m,n}(s,u,t,v;\nu)$ by
\begin{align}\label{1.12}
&d_1d_2L_{m,n}(s,u,t,v;\nu)=
\frac{d_1d_2}{(2\pi i)^2\sqrt{(1-s)(1-t)}}\int_{\Gamma_0}\,dw
\int_{\gamma_1}\,dz\frac 1{w-z}
\notag\\
&\times e^{-\frac{s}{2(1-s)}z^2-a_1z+\frac{uz}{1-s}+
\frac{t}{2(1-t)}w^2+a_1w-\frac{vw}{1-t}}\prod_{j=1}^n\frac{1-w/\nu_j}{1-z/\nu_j}
\notag\\
&+\frac{\det(I-M_0+(\mathcal{B}_{v,t}+\beta_{v,t})\otimes\mathcal{C}_{u,s})_{L^2[1,\infty)}}
{\det(I-M_0)_{L^2[1,\infty)}}-1.
\end{align}
Similarly, we define $\hat{L}_{m,n}(s,u,t,v;\nu)$ by
\begin{align}\label{1.12.2}
&d_1d_2\hat{L}_{m,n}(s,u,t,v;\nu)=
\frac{d_1d_2}{(2\pi i)^2\sqrt{(1-s)(1-t)}}\int_{\Gamma_0}\,dw
\int_{\gamma_2}\,dz\frac 1{w-z}
\notag\\
&\times e^{-\frac{s}{2(1-s)}z^2-a_2z+\frac{uz}{1-s}+
\frac{t}{2(1-t)}w^2+a_2w-\frac{vw}{1-t}}\prod_{j=1}^m\frac{1-w/\nu_{n+j}}{1-z/\nu_{n+j}}
\notag\\
&+\frac{\det(I-\hat{M}_0+(\hat{\mathcal{B}}_{v,t}+\hat{\beta}_{v,t})\otimes
\hat{\mathcal{C}}_{u,s})_{L^2[1,\infty)}}
{\det(I-\hat{M}_0)_{L^2[1,\infty)}}-1.
\end{align}
Set
\begin{equation}\label{1.14}
\mathcal{L}_{m,n}(s,u,v,t;\nu)=L_{m,n}(s,u,t,v;\nu)+\hat{L}_{m,n}(s,u,t,v;\nu)-q(s,u,t,v).
\end{equation}
As discussed above this is the kernel for non-colliding Brownian motions with two starting points
and arbitrary endpoints.
Before stating our first main result we note that when $a_2=-a_1$  reflection in the time axis gives 
a simple relation  between $L_{n,m}$ and $\hat{L}_{m,n}$.
\begin{proposition}\label{prop1.1.1}
Assume that $a_2=-a_1$ and set $\hat{\nu}_{j}=-\nu_{n+m+1-j}$, $1\le j\le n+m$. Then,
\begin{equation}\label{1.12.7}
\hat{L}_{m,n}(s,u,t,v;\nu)=L_{n,m}(s,-u,t,-v;\hat{\nu}).
\end{equation}
\end{proposition}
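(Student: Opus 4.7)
The plan is to prove (\ref{1.12.7}) by the change of variables $w\mapsto -w$, $z\mapsto -z$, $\zeta\mapsto -\zeta$, $\omega\mapsto -\omega$ in every complex contour integral appearing in the definition (\ref{1.12.2}) of $\hat{L}_{m,n}$, and verifying term-by-term that the result coincides with the definition (\ref{1.12}) of $L_{n,m}$ at the reflected arguments $(-u,-v,\hat{\nu})$. The two hypotheses enter for different reasons: $a_2=-a_1$ is exactly what makes the linear-in-variable terms $\pm a_2w$, $\pm a_2z$ in the exponents match the corresponding $\mp a_1w$, $\mp a_1z$ after substitution; $\hat{\nu}_j=-\nu_{n+m+1-j}$ is exactly what is needed for the rational factors to match after reindexing $j\leftrightarrow n+m+1-j$.

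First I would handle the first (non-Fredholm) term of (\ref{1.12.2}). Under $w\mapsto -w$, $z\mapsto -z$, the vertical line $\Gamma_0$ returns to itself after reparametrization, while the closed contour $\gamma_2$ is carried by the $180^\circ$ rotation $z\mapsto -z$ to a counterclockwise loop in the left half-plane enclosing the points $-\nu_{n+j}=\hat{\nu}_{m+1-j}$, $1\le j\le m$, i.e.\ precisely the contour $\gamma_1$ for the configuration $\hat{\nu}$ (with $n$ and $m$ swapped). The minus sign from $1/(w-z)\mapsto 1/(z-w)$ cancels the minus sign from the closed-contour differential. Using $a_2=-a_1$, the exponent becomes the exponent of the first term of $L_{n,m}(s,-u,t,-v;\hat{\nu})$, and the product $\prod_{j=1}^m(1-w/\nu_{n+j})/(1-z/\nu_{n+j})$ becomes $\prod_{j=1}^m(1-w/\hat{\nu}_j)/(1-z/\hat{\nu}_j)$.

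For the Fredholm ratio I apply the same substitution to the four ingredients $\hat{M}_0$, $\hat{\mathcal{B}}_{v,t}$, $\hat{\beta}_{v,t}$, $\hat{\mathcal{C}}_{u,s}$. A direct check gives $\hat{M}_0(x,y;\nu)=M_0(x,y;\hat{\nu})$ with $n,m$ interchanged, since every rational factor in $\hat{M}_0$ has matched minus signs in numerator and denominator. For the other three objects the products containing $(\zeta/\nu_j-1)$ or $(w/\nu_j-1)$ or $1/(\omega/\nu_j-1)$ over $j=1,\dots,n$ contribute a genuine $(-1)^n$ (e.g.\ $\prod_{j=1}^n(\zeta/\nu_j-1)=(-1)^n\prod_{j=1}^n(1-\zeta/\hat{\nu}_{m+j})$ after substitution and reindexing), so that each of $\hat{\mathcal{B}}_{v,t}$, $\hat{\beta}_{v,t}$, $\hat{\mathcal{C}}_{u,s}$ equals $(-1)^n$ times the corresponding object in the definition of $L_{n,m}(s,-u,t,-v;\hat{\nu})$. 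The rank-one operator $(\hat{\mathcal{B}}_{v,t}+\hat{\beta}_{v,t})\otimes\hat{\mathcal{C}}_{u,s}$ therefore picks up $(-1)^{2n}=1$, and the Fredholm ratio in (\ref{1.12.2}) is mapped exactly to the ratio in (\ref{1.12}) evaluated at the reflected arguments.

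The main obstacle will be the sign bookkeeping. I need to separately track (a) the minus sign from $dz\mapsto -dz$ on each closed contour (versus the vertical lines $\Gamma_c$, where the sign is absorbed when one relabels $\Gamma_c\to\Gamma_{-c}$ with the standard upward orientation); (b) the sign from Cauchy factors such as $1/(\zeta-w)$ or $1/(\omega-\zeta)$; and (c) the signs produced by rewriting products $\prod(\zeta/\nu_j\pm 1)$ as products over $\hat{\nu}_{\cdot}$. The delicate point is that the several $(-1)^n$ factors coming from $\hat{\mathcal{B}}$, $\hat{\beta}$, and $\hat{\mathcal{C}}$ must all appear with the same parity $n$, so that they square to $1$ in the rank-one perturbation and leave the Fredholm determinant genuinely invariant.
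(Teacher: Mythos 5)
Your proposal is correct and takes essentially the same route as the paper: the paper's proof simply records the identities $\hat{M}_0=M_0$, $\hat{\mathcal{B}}_{v,t}=(-1)^n\mathcal{B}_{-v,t}$, $\hat{\beta}_{v,t}=(-1)^n\beta_{-v,t}$, $\hat{\mathcal{C}}_{u,s}=(-1)^n\mathcal{C}_{-u,s}$ (at the reflected data $\hat{\nu}$ with $m,n$ swapped) and substitutes them into (\ref{1.12.2}), which is exactly what your change of variables $w,z,\zeta,\omega\mapsto -w,-z,-\zeta,-\omega$ establishes once the sign bookkeeping (a)--(c) is done. Your expanded account of where the $(-1)^n$ comes from, and of the cancellation of the closed-contour and Cauchy-factor signs, fills in the details the paper leaves implicit.
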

\begin{proof}
Using the definitions we see that in this case $\hat{M}_0=M_0$, $\hat{\mathcal{B}}_{v,t}(x)=(-1)^n
\mathcal{B}_{-v,t}(x)$, $\hat{\beta}_{v,t}(x)=(-1)^n\beta_{-v,t}(x)$ and $\hat{\mathcal{C}}_{u,s}(y)
=(-1)^{n}\mathcal{C}_{-u,s}(y)$. If we use these identities in (\ref{1.12.2}) we see from (\ref{1.12})
that (\ref{1.12.7}) holds.
\end{proof}

We can now formulate our first main result.

\begin{theorem}\label{thm1.2}
We have the following pointwise limit,
\begin{equation}\label{1.15}
\mathcal{L}_{m,n}(s,u,v,t;\nu)=\lim_{K\to\infty}\mathcal{L}_{K,m,n}(s,u,t,v;\nu).
\end{equation}
\end{theorem}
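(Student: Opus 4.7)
The plan is to derive $\mathcal{L}_{K,m,n}$ as a ratio of finite determinants, take the $K\to\infty$ confluent limit, and convert the resulting Toeplitz-like determinants into Fredholm determinants via the Geronimo--Case/Borodin--Okounkov identity, matching (\ref{1.12}) and (\ref{1.12.2}). The starting observation is Cramer's rule,
\[
\sum_{j,k}p_{1-s}(u,\nu_k)(A^{-1})_{kj}p_t(\mu_j,v) \;=\; -\det\tilde A/\det A,
\]
where $\tilde A$ is $A$ bordered by the column $c_j=p_t(\mu_j,v)$ and the row $b_k=p_{1-s}(u,\nu_k)$; the conjugation (\ref{1.2}) absorbs the Gaussian tails in $u$ and $v$, leaving contour-integral-friendly exponents of the form $e^{\mu_j w - w^2/2}$.

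Next I would pass to the confluent limit. Under the choice (\ref{1.1'}), the first $n$ rows of $A$ cluster at $\mu=a_1$ and the last $m$ at $\mu=a_2$. Successive row-differences, rescaled by appropriate powers of $a/K$, replace these clustered rows by derivative rows $(\partial_\mu)^{k-1}p_1(\mu,\nu_\cdot)\big|_{\mu=a_\ell}$, with the Jacobian powers of $a/K$ cancelling between $\det\tilde A$ and $\det A$. Writing $p_1(\mu,\nu)$ as a Gaussian contour integral turns those derivatives into polynomial factors $w^{k-1}$ in the integrand; summing the resulting geometric series over $k$ produces the products $\prod(1-w/\nu_j)$ visible in (\ref{1.4})--(\ref{1.12.6}). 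In this way $\det A$ factors into two blocks, one for each cluster, naturally producing the $\gamma_1$ contour around $\nu_1,\dots,\nu_n$ and the $\gamma_2$ contour around $\nu_{n+1},\dots,\nu_{n+m}$, and the bordering row/column of $\tilde A$ attaches to one of these blocks.

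Expanding $\det\tilde A$ along the bordering row then splits the sum according to which cluster the attached column belongs to: the contributions with column index in the $a_1$-cluster assemble into $L_{m,n}$, those in the $a_2$-cluster into $\hat L_{m,n}$; combined with the $-p_{t-s}$ term, which under conjugation becomes $-q$, this reproduces (\ref{1.14}). Each of the limiting block-determinants has the form of a Toeplitz determinant with symbol essentially $\prod_j(1-z/\nu_j)^{\pm1}$. Applying the Geronimo--Case/Borodin--Okounkov identity identifies the denominator with $\det(I-M_0)_{L^2[1,\infty)}$ (respectively $\det(I-\hat M_0)$) and the bordered numerator with the rank-one perturbation $\det(I-M_0+(\mathcal B_{v,t}+\beta_{v,t})\otimes\mathcal C_{u,s})$. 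The single-contour piece $\beta_{v,t}$ appears as a residue contribution when a $w$-contour is pushed across a pole at a $\nu_k$ during this manipulation, which also explains the asymmetric conditions $c\ge 0$ in (\ref{1.4}) versus $c\le 0$ in (\ref{1.12.3}). Lemma~\ref{lem1.1} guarantees the denominators are non-zero, so the limiting formula is well-defined.

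The main obstacle I expect is the justification of the confluent limit itself: one must commute $K\to\infty$ with the multiple contour integrals and with the Fredholm expansion of $\det(I-M_0)$, which requires uniform-in-$K$ estimates on the integrands both away from and near the poles at the $\nu_j$'s. A secondary technical difficulty is the precise Toeplitz identification described above: the symbol structure emerges only after a judicious change of variable bringing the block-determinants to Toeplitz form, and matching the $\beta_{v,t}$ piece with a specific residue term in the Borodin--Okounkov expansion requires careful bookkeeping of signs and of contour crossings relative to the lines $\Gamma_c$.
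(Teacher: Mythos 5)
Your high-level plan shares ingredients with the paper (Cramer's rule, Toeplitz/GCBO, Fredholm determinants, a split according to which cluster the index $k$ belongs to), but the central mechanism you propose for the $K\to\infty$ limit is not the paper's and, as described, has a gap. You propose to take a \emph{confluent limit first}, replacing the clustered rows of $A$ by derivative rows $(\partial_\mu)^{k-1}p_1(\mu,\nu_\cdot)$, with the Jacobian powers of $a/K$ cancelling, and only \emph{afterwards} to bring the resulting block determinants into Toeplitz form ``after a judicious change of variable.'' But after such a confluent limit the determinants have a Wronskian (derivative) structure in the variables $w$, not a Toeplitz structure, and it is not at all clear how the required Toeplitz symbol, let alone a Toeplitz determinant of growing size $K$, would emerge. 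The whole point in the paper is that the Toeplitz determinant has order $K$ (not $n+m$): the starting points (\ref{1.1'}) produce a Vandermonde $a_{\delta+<K^m>}(x)$ with $x_i=e^{a\nu_i/K}$, which is a Schur polynomial $s_{<K^m>}$, and the Jacobi--Trudi identity (\ref{2.3}) turns this $(n+m)$-variable Schur polynomial into a $K\times K$ Toeplitz determinant $D_K[g_K]$. GCBO is then applied at finite $K$ to pass to $\det(I-\mathcal K)_{\bar\ell^2(K)}$, and only then is $K\to\infty$ taken, with the discrete index $r\ge K$ rescaled as $r=[xK]$ to yield the Fredholm determinant on $L^2[1,\infty)$. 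Your ordering (confluent limit, then Toeplitz identification) inverts this, and the promised ``judicious change of variable'' is left as an unsupported assertion that would have to reconstruct the Schur/Jacobi--Trudi step from scratch.

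Two smaller inaccuracies: (i) the factors $\prod(1-w/\nu_j)$ do not arise from ``summing a geometric series over $k$''; they come from the Toeplitz symbol $g_K$ and the prefactors in (\ref{2.5}), reappearing as $H_{m,n}(\zeta)$ after the $K\to\infty$ rescaling. (ii) The single-contour term $\beta_{v,t}$ is the residue at $\zeta=w$ picked up when deforming $\gamma_{1,w}$ to a contour $\gamma'_{1,w}$ that no longer encircles $w$ (see the computation leading to (\ref{2.66})); it is not a residue at a pole $\nu_k$. Your identification of the ``main obstacle'' as commuting limits with the Fredholm expansion is correct and is handled in the paper via the bounds (\ref{2.42}), (\ref{2.17:4})--(\ref{2.17:6}) and dominated convergence, but you would additionally need the rank-one linearization $M^{(z,w)}=M_0-(w-z)b_1^w\otimes b_2^z$ of (\ref{2.60}) together with the algebraic Lemma~\ref{lem2.2} to move the $z,w$-integrations inside the Fredholm determinant; that step is not visible in your outline.
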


The proof of this theorem is rather involved and will be given in section \ref{sect2}.

We want to use the formula for $\mathcal{L}_{m,n}(s,u,v,t;\nu)$ to investigate the scaling limit of 
the determinantal point processes defined by this kernel in a case
where we have a tacnode. A tacnode situation can be obtained when we have two groups of non-colliding
Brownian motions starting at two points and ending at two points. The starting and ending points are
chosen in such a way that in the limit $n,m\to\infty$ the global picture of the paths consists of two
tangent ellipses. 
With appropriate scaling near the point of tangency, the tacnode determinantal point process is obtained
in the limit.
We do not consider the most general possible geometry. 
Instead we consider a symmetric case.

Consider the case when $n=m$, $a_1=-a/2$, $a_2=a/2$, $\nu_j=b_1=-a/2$, $\nu_{n+j}=b_2=a/2$ for
$1\le j\le n$ with $a>0$. Call this choice $\nu^*$. Though we initially assumed that $\nu_1<\dots <\nu_N$,
we can let points coincide 
by taking a limit and using continuity. The determinantal point process with kernel $\mathcal{L}_{m,n}(s,u,v,t;\nu)$
is still well-defined.
Write
\begin{equation}
\mathcal{L}_n(s,u,t,v)=\mathcal{L}_{n,n}(s,u,t,v;\nu^*)
\notag
\end{equation}
and similarly for $L$ and $\hat{L}$. It follows from (\ref{1.12.7}) that
\begin{equation}\label{1.16}
\hat{L}_{n}(s,u,t,v)=L_{n}(s,-u,y,-v).
\end{equation}

Consider $n$ Brownian motions starting at 0 at time 0 and ending at
0 at time 1 and conditioned not to intersect. At time 1/2 the particles are distributed
as the eigenvalues of an $n\times n$  GUE matrix, and hence are approximately distributed as the semi circle
law for large n. Therefore particles approximately lie between $-\sqrt{n}$
and $\sqrt{n}$. Hence if we choose $a=2\sqrt{n}$ we expect that the two groups of Brownian 
motions just touch at time 1/2 in a region around the origin. The fluctuations of the largest eigenvalue
of a GUE matrix are of order $n^{-1/6}$. These considerations motivate the following choice of
scaling limit,
\begin{align}\label{1.17}
a&=2\sqrt{n}+\sigma n^{-1/6}
\notag\\
s&=\frac 12(1+\tau_1 n^{-1/3}),\quad t=\frac 12(1+\tau_2 n^{-1/3})
\notag\\
u&=\frac 12\xi_1n^{-1/6},\quad v=\frac 12\xi_2n^{-1/6}.
\end{align}
The parameter $\sigma$ measures how much the two groups of Brownian motions press against
each other, and the scaling in the time direction is such that we have the standard Brownian
space-time relation. The numbers $d_1,d_2$ in (\ref{1.12}) and (\ref{1.12.2}) will be chosen as
\begin{align}\label{1.18}
d_1&=\frac{n^{-1/12}}{\sqrt{2}}e^{\tau_1(\sigma+\xi_1)+\frac 23\tau_1^3}
\notag\\
d_2&=\frac{n^{-1/12}}{\sqrt{2}}e^{-\tau_2(\sigma+\xi_2)-\frac 23\tau_2^3}.
\end{align}
That $d_1d_2$ should contain the factor $n^{-1/6}$ can be seen from the volume element in (\ref{1.17}).
That $n^{-1/12}$ is the right choice for $d_1$ and $d_2$ individually is something that comes out of the asymptotic analysis. 
The choice of the other factors is somewhat arbitrary and different conjugations will lead to minor modifications
in the formulas below, e.g. we could make the formula (\ref{1.26}) more symmetric by a different choice. Set
\begin{align}\label{1.19}
&p(\tau_1,\xi_1,\tau_2,\xi_2)
\notag\\
&=\frac 1{\sqrt{4\pi(\tau_2-\tau_1)}}\exp\left(
-\frac {(\xi_1-\xi_2)^2}{4(\tau_2-\tau_1)}+\tau_1(\xi_1+\sigma)-\tau_2(\xi_2+\sigma)
-\frac 23\tau_2^3+\frac 23\tau_1^3\right),
\end{align}
and 
\begin{equation}\label{1.20}
\tilde{A}(\tau_1,\xi_1,\tau_2,\xi_2)=\int_0^\infty e^{\lambda(\tau_2-\tau_1)}\Ai(\xi_1+\lambda)
\Ai(\xi_2+\lambda)\,d\lambda.
\end{equation}
Also, let
\begin{equation}\label{1.21}
K_{\text{Ai}}(x,y)=\int_0^\infty\Ai(x+\lambda)\Ai(y+\lambda)\,d\lambda,
\end{equation}
be the {\it Airy kernel}. The {\it Tracy-Widom distribution} $F_2(s)$ is given by
\begin{equation}\label{1.22}
F_2(s)=\det(I-K_{\text{Ai}})_{L^2(s,\infty)}.
\end{equation}
Set
\begin{equation}\label{1.23}
B_{\xi,\tau}(x)=2^{1/6}\int_0^\infty e^{2^{1/3}\lambda\tau}\Ai(\xi+\tau^2+2^{1/3}\lambda)\Ai(x+\lambda)\,d\lambda
\end{equation}
and
\begin{equation}\label{1.24}
b_{\xi,\tau}(x)=2^{1/6}e^{-2\tau\xi+2^{1/3}\tau x}\Ai(-\xi+\tau^2+2^{1/3} x).
\end{equation}
Write $\tilde{\sigma}=2^{2/3}\sigma$ to simplify the notation. Define
\begin{align}\label{1.25}
&L_{\text{tac}}(\tau_1,\xi_1,\tau_2,\xi_2)=\tilde{A}(\tau_1,\xi_1+\tau_1^2+\sigma,\tau_2,\xi_2+\tau_2^2+\sigma)-1
\notag\\
&+\frac 1{F_2(\tilde{\sigma})}\det(I-K_{\text{Ai}}+(B_{\xi_2+\sigma,\tau_2}-b_{\xi_2+\sigma,\tau_2})
\otimes B_{\xi_1+\sigma,-\tau_1})_{L^2(\tilde{\sigma},\infty)}
\end{align}
and the {\it extended tacnode kernel},
\begin{align}\label{1.26}
\mathcal{L}_{\text{tac}}(\tau_1,\xi_1,\tau_2,\xi_2)&=L_{\text{tac}}(\tau_1,\xi_1,\tau_2,\xi_2)
+e^{2\tau_1\xi_1-2\tau_2\xi_2}L_{\text{tac}}(\tau_1,-\xi_1,\tau_2,-\xi_2)
\notag\\
&-p(\tau_1,\xi_1,\tau_2,\xi_2)\,1_{\tau_1<\tau_2}.
\end{align}
Note that the kernel in the Fredholm determinant in (\ref{1.25}) is a perturbation of the Airy kernel operator
with a rank one operator and hence is a trace class operator.
Our second main result is

\begin{theorem}\label{thm1.3} 
With  the scalings (\ref{1.17}) and (\ref{1.18}) we have the following pointwise limit
\begin{equation}\label{1.27}
\lim_{n\to\infty}d_1d_2\mathcal{L}_n(s,u,t,v)=\mathcal{L}_{\text{tac}}(\tau_1,\xi_1,\tau_2,\xi_2).
\end{equation}
\end{theorem}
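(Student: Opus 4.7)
The plan is to use the decomposition (1.14) to split $\mathcal{L}_n$ into the three additive pieces $L_n$, $\hat L_n$, and $-q$, and analyze the scaling limit of each under (1.17)--(1.18). The symmetry (1.16) reduces $d_1d_2\,\hat L_n$ to $d_1d_2\,L_n$ at the negated spatial arguments: from (1.18) one computes $d_1(\xi_1)d_2(\xi_2)/[d_1(-\xi_1)d_2(-\xi_2)]=e^{2\tau_1\xi_1-2\tau_2\xi_2}$, so once $d_1d_2\,L_n\to L_{\text{tac}}$ is established pointwise, applying this limit at $(-\xi_1,-\xi_2)$ and multiplying by the ratio above yields the middle term of (1.26). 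The Gaussian piece is handled directly: for $\tau_1<\tau_2$, the factor $d_1d_2/\sqrt{2\pi(t-s)}$ produces $1/\sqrt{4\pi(\tau_2-\tau_1)}$ together with the exponential prefactor of (1.19), the boundary Gaussians $e^{u^2/2(1-s)-v^2/2(1-t)}$ tend to $1$, and the Brownian exponent tends to $-(\xi_1-\xi_2)^2/4(\tau_2-\tau_1)$, giving $d_1d_2\,q\to p(\tau_1,\xi_1,\tau_2,\xi_2)$; when $\tau_1\ge\tau_2$ the kernel $p_{t-s}$ vanishes, producing the indicator.

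The substantive task is to prove $d_1d_2\,L_n\to L_{\text{tac}}$ by saddle-point analysis. In the symmetric case all products over $\nu_j$ in (1.4)--(1.7) and (1.12) collapse to $(1\pm 2\cdot/a)^n$, so every $n$-dependent phase is explicit. For the double-contour term on the first line of (1.12), both the $z$- and $w$-phases have coalescing saddles near the origin under (1.17): substituting $z=n^{1/6}\tilde z$ and $w=n^{1/6}\tilde w$ and Taylor-expanding $n\log(1\pm 2\cdot/a)$ with $a=2\sqrt n+\sigma n^{-1/6}$ gives a cubic polynomial in $\tilde z$ whose leading coefficient is $1/3$; the linear and quadratic terms from $s,u,a_1$ combine with it to yield the standard Airy cubic with Airy argument $\xi_1+\tau_1^2+\sigma$ after completing the cube (similarly for $\tilde w$). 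The $n^{1/6}$ Jacobians from the rescaling are absorbed into $d_1d_2$. Deforming $\Gamma_0$ and $\gamma_1$ to steepest-descent contours through the saddle, representing the Cauchy pole as a Laplace integral $1/(w-z)=\int_0^\infty e^{\lambda(z-w)}d\lambda$ (valid once the contours are placed so that $\re(w-z)>0$), and applying Airy's integral formula yields exactly $\tilde A(\tau_1,\xi_1+\tau_1^2+\sigma,\tau_2,\xi_2+\tau_2^2+\sigma)$, accounting for the first line of (1.25) minus $1$.

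For the Fredholm-determinant ratio in (1.12), the same saddle-point machinery is applied to $M_0$ in (1.7) and to the rank-one factors $\mathcal B_{v,t},\beta_{v,t},\mathcal C_{u,s}$ in (1.4)--(1.6), viewed as operators on $L^2[1,\infty)$. The spatial variables $x,y$ enter through $e^{\pm ax\zeta},e^{\pm ay\omega}$ with $a\sim 2\sqrt n$ and $\zeta,\omega\sim n^{-1/6}$ near the saddle, so the change of variable $x=1+\tilde x\cdot(\text{const})\,n^{-1/3}$ maps $[1,\infty)$ onto $[\tilde\sigma,\infty)$ once the $\sigma n^{-1/6}$ correction in $a$ is absorbed, with $\tilde\sigma=2^{2/3}\sigma$ emerging as the natural endpoint. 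Under this rescaling, $M_0$ converges in trace norm on $L^2(\tilde\sigma,\infty)$ to $K_{\text{Ai}}$, while $\mathcal B_{v,t},\beta_{v,t},\mathcal C_{u,s}$ converge in $L^2$ to $B_{\xi_2+\sigma,\tau_2}$, $b_{\xi_2+\sigma,\tau_2}$, and $B_{\xi_1+\sigma,-\tau_1}$ respectively; the exponential prefactors in (1.18) are precisely what is needed to match (1.4)--(1.6) against (1.23)--(1.24). Continuity of Fredholm determinants in trace norm, together with $\det(I-K_{\text{Ai}})_{L^2(\tilde\sigma,\infty)}=F_2(\tilde\sigma)$, identifies the limit of the ratio with the Fredholm fraction in (1.25).

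The principal obstacle is establishing trace-norm (rather than merely pointwise) convergence of the rescaled operators with bounds uniform in $n$, which is what is needed to pass the limit through the Fredholm determinant. The standard device is to split each contour into a neighborhood of the saddle of radius $n^{-1/6}\log n$, where the cubic Airy expansion is uniformly accurate, plus a tail on which the real part of the phase is strictly negative and contributions are exponentially small. These estimates have to be compatible with the multiplicative structure of $M_0$'s integrand and with the half-line $[1,\infty)$ in the spatial variables, so that one obtains dominated $L^2$ bounds on the rescaled kernels; in particular, the interplay between the scaled spatial variable and the $\sigma n^{-1/6}$ correction in $a$ is what selects the left endpoint $\tilde\sigma$, and controlling this interplay uniformly in $n$ is the most delicate step. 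Once these uniform estimates are in place, the interchange of limit and Fredholm determinant is routine.
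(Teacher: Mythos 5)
Your overall decomposition is the same as the paper's: split via (1.14), use the reflection identity (1.16) to recover the $\hat L_n$ contribution from the $L_n$ asymptotics, compute the Gaussian piece directly, and reduce the substantive work to proving $d_1d_2\,L_n\to L_{\text{tac}}$ and $d_1d_2\,q\to p$. Where you diverge from the paper is in the two key technical steps, and in both cases you choose a genuinely different (and in both cases in principle viable) route.

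First, for the asymptotics of the building blocks $\mathcal B_{v,t}$, $\beta_{v,t}$, $\mathcal C_{u,s}$, $M_0$ and the extended Hermite-type double integral in (3.1), you propose a direct steepest-descent analysis of the contour integrals. The paper instead observes that in the symmetric case every one of these objects has a closed-form expression in Hermite and Laguerre polynomials (formulas (3.10)--(3.16)), and then imports Plancherel--Rotach asymptotics (Lemma 3.1, from \cite{De99} and \cite{Va05}). The paper explicitly remarks that the direct saddle-point route is also possible; what the Hermite/Laguerre route buys is that all the delicate uniform tail estimates needed for dominated convergence — the exponential and super-exponential bounds (3.18), (3.19), (3.21) — are delegated to the orthogonal-polynomial literature rather than having to be rebuilt by hand on the contours. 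You correctly identify this uniformity as the hard part; the paper avoids re-deriving it.

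Second, for passing the limit through the Fredholm determinant, you propose trace-norm convergence of the rescaled operators plus continuity of the determinant in trace norm. The paper does something more elementary: it writes out the Fredholm series of $\det(I - M_0 + (\mathcal B_{v,t}+\beta_{v,t})\otimes\mathcal C_{u,s})$, changes variables $\rho_j = 1 + x_j n^{-2/3}$ in each term, and passes to the limit term by term using the pointwise limits and uniform exponential decay in Lemma 3.2, Hadamard's inequality, and dominated convergence. No operator-norm or trace-norm bound is ever needed. Both arguments work, but the paper's is cheaper precisely because Lemma 3.2 already supplies everything.

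One concrete error in your scaling: the spatial variables should be rescaled as $x=1+\tilde x\,n^{-2/3}$, not $n^{-1/3}$ — see (3.25) and the change of variables $\rho_j=1+x_j n^{-2/3}$ in the Fredholm expansion. Relatedly, the map onto $[\tilde\sigma,\infty)$ is a two-step affine change of variables: $\rho_j=1+x_j n^{-2/3}$ first carries $[1,\infty)$ to $[0,\infty)$, and only the subsequent substitution $x_j=2^{-2/3}(y_j-\tilde\sigma)$ produces the endpoint $\tilde\sigma$, because the $\sigma n^{-1/6}$ correction to $a$ manifests as the additive shift $\tilde\sigma$ in the arguments of the limiting Airy functions rather than directly as a boundary of integration. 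These are fixable bookkeeping mistakes; they do not undermine your strategy, but they would derail the explicit computations if carried forward.
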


We will prove the theorem in section \ref{sect3}. In the asymptotic analysis we use the fact that 
the objects that appear in 
$\mathcal{L}_n(s,u,t,v)$ can be expressed using Laguerre and Hermite polynomials, and we use 
their known asymptotics.

The formula (\ref{1.25}) can be written in different ways. Let us give another, somewhat more explicit,
version. It suffices to give
a formula for $L_{\text{tac}}$.

Let $R(x,y)$ be the resolvent operator for the restriction of the Airy kernel
to $[\tilde{\sigma},\infty)$, i.e. the kernel of the operator
\begin{equation}\label{1.28}
R=(I-K_{\text{Ai}})^{-1}K_{\text{Ai}}
\end{equation}
on $L^2[\tilde{\sigma},\infty)$.

\begin{proposition}\label{prop1.4}
We have the following formula,
\begin{align}\label{1.29}
&L_{\text{tac}}(\tau_1,\xi_1,\tau_2,\xi_2)=\tilde{A}(\tau_1,\xi_1+\tau_1^2+\sigma,\tau_2,\xi_2+\tau_2^2+\sigma)
\notag\\
&+2^{1/3}e^{2\sigma(\tau_1-\tau_2)}\int_{\tilde{\sigma}}^\infty\int_{\tilde{\sigma}}^\infty e^{2^{1/3}(\tau_2x-\tau_1y)}
\Ai(\xi_2+\tau_2^2-\sigma+2^{1/3}x)
\notag\\
&\times R(x,y)\Ai(\xi_1+\tau_1^2-\sigma+2^{1/3}y)\,dxdy
\notag\\
&-2^{1/3}e^{2\sigma(\tau_1-\tau_2)-2\tau_2\xi_2}\int_{\tilde{\sigma}}^\infty\int_{\tilde{\sigma}}^\infty e^{2^{1/3}(\tau_2x-\tau_1y)}
\Ai(-\xi_2-\sigma+\tau_2^2+2^{1/3}x)
\notag\\
&\times\Ai(x+y-\tilde{\sigma})\Ai(\xi_1+\tau_1^2-\sigma+2^{1/3}y)\,dxdy
\notag\\
&-2^{1/3}e^{2\sigma(\tau_1-\tau_2)-2\tau_2\xi_2}\int_{\tilde{\sigma}}^\infty\int_{\tilde{\sigma}}^\infty
\int_{\tilde{\sigma}}^\infty e^{2^{1/3}(\tau_2x-\tau_1z)}
\Ai(-\xi_2-\sigma+\tau_2^2+2^{1/3}x)
\notag\\
&\times R(x,y)\Ai(y+z-\tilde{\sigma})\Ai(\xi_1+\tau_1^2-\sigma+2^{1/3}z)\,dxdydz.
\end{align}
\end{proposition}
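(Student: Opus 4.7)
The plan is to reduce (\ref{1.25}) to (\ref{1.29}) by a sequence of operator identities on $L^2(\tilde{\sigma},\infty)$. First, I apply the standard rank-one perturbation identity
$$\det(I - K_{\text{Ai}} + f \otimes g)_{L^2(\tilde{\sigma},\infty)} = \det(I - K_{\text{Ai}})\bigl(1 + \langle g, (I - K_{\text{Ai}})^{-1} f\rangle\bigr)$$
with $f = B_{\xi_2+\sigma,\tau_2} - b_{\xi_2+\sigma,\tau_2}$, $g = B_{\xi_1+\sigma,-\tau_1}$, divide by $F_2(\tilde{\sigma})$, and expand $(I-K_{\text{Ai}})^{-1} = I + R$. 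The $-1$ in (\ref{1.25}) cancels the $+1$ coming from the expansion, so
$$L_{\text{tac}} = \tilde{A}(\tau_1,\xi_1+\tau_1^2+\sigma,\tau_2,\xi_2+\tau_2^2+\sigma) + I_1 + I_2 - I_3 - I_4,$$
where $I_1 = \langle B_1, B_2\rangle$, $I_2 = \langle B_1, RB_2\rangle$, $I_3 = \langle B_1, b_2\rangle$, $I_4 = \langle B_1, Rb_2\rangle$, all inner products taken on $L^2(\tilde{\sigma},\infty)$, and I abbreviate $B_1 = B_{\xi_1+\sigma,-\tau_1}$, $B_2 = B_{\xi_2+\sigma,\tau_2}$, $b_2 = b_{\xi_2+\sigma,\tau_2}$.

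The key algebraic observation is that the restricted Airy kernel factors as $K_{\text{Ai}} = \hat A^2$, where $\hat A$ is the self-adjoint operator with symmetric kernel $\hat A(x,y) = \Ai(x+y-\tilde{\sigma})$; this follows from the substitution $\lambda = \mu - \tilde{\sigma}$ in (\ref{1.21}). Correspondingly, changing variable $y = \lambda + \tilde{\sigma}$ in (\ref{1.23}) and using $2^{1/3}\tilde{\sigma} = 2\sigma$ gives the crucial representation
$$B_{\xi+\sigma,\tau}(x) = 2^{1/6}e^{-2\sigma\tau}(\hat A g_{\xi,\tau})(x), \qquad g_{\xi,\tau}(y) = e^{2^{1/3}\tau y}\Ai(\xi - \sigma + \tau^2 + 2^{1/3}y).$$
With $g_1 = g_{\xi_1,-\tau_1}$ and $g_2 = g_{\xi_2,\tau_2}$, one then has $B_1 = 2^{1/6}e^{2\sigma\tau_1}\hat A g_1$ and $B_2 = 2^{1/6}e^{-2\sigma\tau_2}\hat A g_2$. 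Since $\hat A$ commutes with $K_{\text{Ai}}$ and hence with $R = (I-K_{\text{Ai}})^{-1}K_{\text{Ai}}$, a short calculation yields $\hat A R \hat A = RK_{\text{Ai}} = R - K_{\text{Ai}}$.

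Combining these, the first two inner products collapse into a single one:
$$I_1 + I_2 = 2^{1/3}e^{2\sigma(\tau_1-\tau_2)}\bigl[\langle g_1, K_{\text{Ai}} g_2\rangle + \langle g_1, (R-K_{\text{Ai}})g_2\rangle\bigr] = 2^{1/3}e^{2\sigma(\tau_1-\tau_2)}\langle g_1, Rg_2\rangle,$$
which, written out, is exactly the first double integral in (\ref{1.29}). For $I_3$ and $I_4$ I only half-factor, writing $B_1 = 2^{1/6}e^{2\sigma\tau_1}\hat A g_1$ and moving $\hat A$ across the pairing; this brings the kernel $\Ai(x+y-\tilde{\sigma})$ into the integrand. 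Substituting the explicit form (\ref{1.24}) of $b_2$ then replaces the other factor by $\Ai(-\xi_2-\sigma+\tau_2^2+2^{1/3}x)$ and supplies the exponential $e^{-2\tau_2\xi_2}$, so that $-I_3$ reproduces the double integral containing $\Ai(x+y-\tilde{\sigma})$ in (\ref{1.29}) and $-I_4$ reproduces the triple integral containing $R(x,y)\Ai(y+z-\tilde{\sigma})$.

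The main obstacle is purely bookkeeping: one must carefully track the constants $e^{\pm 2\sigma\tau_i}$, $e^{-2\tau_2\xi_2}$, the relation $\tilde{\sigma} = 2^{2/3}\sigma$, and the scale factor $2^{1/3}$ through each change of variable so that the Airy arguments $\xi_i+\tau_i^2\mp\sigma+2^{1/3}(\cdot)$ and the overall prefactor $e^{2\sigma(\tau_1-\tau_2)}$ of (\ref{1.29}) come out with the correct signs. Once the key representation $B_{\xi+\sigma,\tau} = 2^{1/6}e^{-2\sigma\tau}\hat A g_{\xi,\tau}$ is in hand, all the rest is a routine manipulation of the four operator inner products.
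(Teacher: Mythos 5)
Correct, and essentially the same approach as the paper's. The paper introduces $T(x,y)=\Ai(x+y-\tilde{\sigma})$ with $K_{\text{Ai}}=T^2$ and shows $B_{\xi+\sigma,\tau}=TS_{\xi,\tau}$, which is exactly your $\hat A$ and $2^{1/6}e^{-2\sigma\tau}g_{\xi,\tau}$; the only superficial difference is that the paper expands $R=\sum_{r\geq 1}T^{2r}$ and resums, whereas you invoke the equivalent identity $\hat A R\hat A = RK_{\text{Ai}} = R-K_{\text{Ai}}$ directly.
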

The proof is a computation and is given in section \ref{sect3}. An alternative tacnode kernel is given in
\cite{AFvM10}. This kernel also contains Airy-like objects just like (\ref{1.29}) but we have not been
able to show that they are equivalent, i.e. give rise to the same correlation functions.

\begin{remark}\label{rem4.3} \rm It follows from the estimates (\ref{2.17:4}), (\ref{2.17:5}) and (\ref{2.17:6}) that for $u,v$ in a
compact subset of $\mathbb{R}$ we have a uniform bound $|L_{K,m,n}(s,u,t,v;\nu)|\le C$.
A similar statement holds for $\hat{L}_{K,m,n}$. Together with the pointwise convergence result in theorem \ref{thm1.2} this can be
used to show that there actually is a point process with
determinantal correlation functions and correlation kernel $\mathcal{L}_{m,n}(s,u,t,v;\nu)$. This follows from a
theorem of Lenard, see \cite{So}, and the fact that $\mathcal{L}_{K,m,n}$ is the correlation kernel
for a determinantal point process. An analysis of the argument used to prove theorem \ref{thm1.3} gives a uniform bound of
$d_1d_2\mathcal{L}_n(s,u,t,v)$ for $\xi_1,\xi_2$ in a compact subset of $\mathbb{R}$, and this can similarly be used to
show that there is a determinantal point process with kernel $\mathcal{L}_{\text{tac}}$.\it
\end{remark}

\section{The kernel for two starting points and arbitrary endpoints}\label{sect2}
\subsection{Formula for the kernel}\label{sect2.1}

Let us first recall some results about Schur polynomials and Toeplitz determinants
that we need.
For a partition $\lambda=(\lambda_1,\dots,\lambda_N)$ and $x=(x_1,\dots,x_N)$ we set
\begin{equation}
a_\lambda(x)=\det(x_i^{\lambda_j})_{1\le i,j\le N}.
\notag
\end{equation}
Let $\delta=(N-1,N-2,\dots,1,0)$. Then
\begin{equation}
a_\delta(x)=\prod_{1\le i<j\le N}(x_i-x_j),
\notag
\end{equation}
is the Vandermonde determinant. The {\it Schur polynomial} $s_\lambda(x)$ is given by
\begin{equation}\label{2.1}
s_\lambda(x)=\frac{a_{\delta+\lambda}(x)}{a_\delta(x)},
\end{equation}
see e.g. \cite{Sa}. Let $e_r(x)$ be the $r$:th elementary symmetric polynomial with generating function
\begin{equation}\label{2.2}
\sum_{r\in\mathbb{Z}} e_r(x)\zeta^r=\prod_{j=1}^N(1+x_j\zeta).
\end{equation}
The Schur polynomial is also given by the Jacobi-Trudi identity
\begin{equation}\label{2.3}
s_{\mu'}(x)=\det(e_{\mu_i-i+j}(x))_{i,j=1}^K,
\end{equation}
where $\mu=(\mu_1,\dots,\mu_K)$ and $\mu'$ is the conjugate partition to $\mu$.
Write \linebreak
$<K^m>=(K,\dots,K,0,\dots,0)$ with $m$ parts equal to $K$. Note that 
\linebreak
$<m^K>'=<K^m>$.
Hence, by (\ref{2.3}),
\begin{equation}\label{2.4}
s_{<K^m>}(x)=\det(e_{m-i+j}(x))_{i,j=1}^K.
\end{equation}
The right side of (\ref{2.4}) is a Toeplitz determinant. Recall that if $f\in L^1(\mathbb{T})$, 
where $\mathbb{T}$ is the unit circle, the Toeplitz determinant $D_n[f(\zeta)]$ with symbol 
$f(\zeta)$, $\zeta\in\mathbb{T}$, is defined by
\begin{equation}
D_n[f(\zeta)]=\det(f_{j-i})_{1\le i,j\le n},
\end{equation}
where $f_k$ is the $k$:th Fourier coefficient of $f$. The generating function for the
Toeplitz determinant in the right side of (\ref{2.4}) is, by (\ref{2.2}),
\begin{align}
&\sum_{j\in\mathbb{Z}}e_{m+j}(x)\zeta^j=\zeta^{-m}\prod_{j=1}^N(1+x_j\zeta)
\notag\\
&=\prod_{j=n+1}^{N}x_j\prod_{j=1}^n(1+x_j\zeta)\prod_{j=n+1}^{N}(1+x_j^{-1}\zeta^{-1}),
\notag
\end{align}
where $N=n+m$ and we have assumed that $x_{n+1},\dots,x_N\neq 0$. Thus,
\begin{equation}\label{2.5}
s_{<K^m>}(x)=D_K\left[\prod_{j=n+1}^{N}x_j
\prod_{j=1}^n(1+x_j\zeta)\prod_{j=n+1}^{N}(1+x_j^{-1}\zeta^{-1})\right].
\end{equation}

Let us now turn to the problem of rewriting the kernel (\ref{1.1}) in a way that is suitable for
our problem. The initial steps are similar to those in \cite{Jo01}.
Let $p$ denote the column vector $p=(p_t(\mu_1,v)\dots p(\mu_n,v))^t$ and $(A|p)_k$ the
matrix $A$ with column $k$ replaced by $p$. By Cramer's rule and (\ref{1.1}) we then have
\begin{equation}\label{2.6}
\mathcal{L}(s,u,t,v;\mu,\nu)=-p_{t-s}(u,v)+\sum_{k=1}^Np_{1-s}(u,\nu_k)
\frac{\det (A|p)_k}{\det A}.
\end{equation}
Note that
\begin{equation}\label{2.7}
p_t(x,y)=\frac{e^{\frac{y^2}{2(1-t)}}}{i\sqrt{2\pi (1-t)}}\int_{\Gamma_c} e^{\frac 1{2(1-t)}
(w^2-2yw)}p_1(x,w)\,dw.
\end{equation}
Set
\begin{equation}
\nu_j^{(k)}=
\begin{cases}
\nu_j, &\text{if $j\neq k$,}\\
w, &\text{if $j=k$,}\\
\end{cases}
\end{equation}
Then, by (\ref{2.7}),
\begin{equation}\label{2.8}
\frac{\det (A|p)_k}{\det A}=
\frac{e^{\frac{v^2}{2(1-t)}}}{i\sqrt{2\pi (1-t)}}\int_{\Gamma_c} e^{\frac 1{2(1-t)}(w^2-2vw)}
\frac{\det(p_1(\mu_i,\nu_j^{(k)}))}{\det(p_1(\mu_i,\nu_j))}\,dw.
\end{equation}
Inserting this into (\ref{2.6}) and using the definition of $p_t(x,y)$ we find
\begin{align}
\mathcal{L}(s,u,t,v;\mu,\nu)&=-p_{t-s}(u,v)+\frac{e^{\frac{v^2}{2(1-t)}-\frac{u^2}{2(1-s)}}}
{2\pi i\sqrt{(1-s)(1-t)}}\int_{\Gamma_c} e^{\frac {w^2}{2(1-t)}-\frac{vw}{1-t}}
\notag\\
&\times\sum_{k=1}^N e^{-\frac{\nu_k^2}{2(1-s)}+\frac{u\nu_k}{1-s}}
\frac{\det(p_1(\mu_i,\nu_j^{(k)}))}{\det(p_1(\mu_i,\nu_j))}\,dw.
\notag
\end{align}
Hence, by (\ref{1.2}) and (\ref{1.3}) with the choice (\ref{1.1'}) of $\mu_j$,
\begin{align}\label{2.9}
\mathcal{L}_{K,m,n}(s,u,t,v;\nu)&=-q(s,u,t,v)+\frac{1}
{2\pi i\sqrt{(1-s)(1-t)}}\int_{\Gamma_c} e^{\frac {w^2}{2(1-t)}-\frac{vw}{1-t}}
\notag\\
&\times\sum_{k=1}^N e^{-\frac{\nu_k^2}{2(1-s)}+\frac{u\nu_k}{1-s}}
\frac{\det(p_1(\mu_i,\nu_j^{(k)}))}{\det(p_1(\mu_i,\nu_j))}\,dw.
\end{align}
Note that, by (\ref{1.1'}),
\begin{equation}\label{2.10}
\mu_{N+1-i}=a_1+\frac{a}K(\delta+<K^m>)_i.
\end{equation}
Using (\ref{2.10}) we see that
\begin{align}
\det(p_1(\mu_i,\nu_j))&=\frac 1{(2\pi)^{N/2}}\det(e^{-(\mu_j-\nu_i)^2/2})=
\frac 1{(2\pi)^{N/2}}\prod_{j=1}^N e^{-(\mu_j^2+\nu_j^2)/2}\det((e^{\nu_i})^{\mu_j})
\notag\\
&=\frac {(-1)^{N(N-1)/2}}{(2\pi)^{N/2}}\prod_{j=1}^N e^{-(\mu_j^2+\nu_j^2)/2+a_1\nu_j}
\det((e^{a\nu_i/K})^{(\delta+<K^m>)_j})
\notag\\
&=\frac {(-1)^{N(N-1)/2}}{(2\pi)^{N/2}}\prod_{j=1}^N e^{-(\mu_j^2+\nu_j^2)/2+a_1\nu_j}
a_{\delta+<K^m>}(x),
\notag
\end{align}
where $x_i=\exp(a\nu_i/K)$. If we set $x_i^{(k)}=\exp(a\nu_i^{(k)}/K)$, then by (\ref{2.1}),
\begin{equation}\label{2.11}
\frac{\det(p_1(\mu_i,\nu_j^{(k)}))}{\det(p_1(\mu_i,\nu_j))}=
\frac{s_{<K^m>}(x^{(k)})}{s_{<K^m>}(x)}\frac{a_\delta(x^{(k)})}{a_{\delta}(x)}
e^{(\nu_k^2-w^2)/2+a_1(w-\nu_k)}.
\end{equation}
Recall that $N=n+m$, and set
\begin{equation}\label{2.12}
g_K(\zeta)=\prod_{j=1}^n(1+e^{a\nu_j/K}\zeta)\prod_{j=n+1}^{n+m}(1+e^{-a\nu_j/K}\zeta^{-1}).
\end{equation}
Note that
\begin{equation}
\zeta^{-m}\prod_{j=1}^N (1+x_j^{(k)}\zeta)=\frac{1+e^{aw/K}\zeta}{1+e^{a\nu_k/K}\zeta}\zeta^{-m}
\prod_{j=1}^N (1+x_j\zeta).
\notag
\end{equation}
Hence, by (\ref{2.5}),
\begin{equation}\label{2.13}
\frac{s_{<K^m>}(x^{(k)})}{s_{<K^m>}(x)}=\frac{D_K\left[\frac{1+e^{aw/K}\zeta}{1+e^{a\nu_k/K}\zeta}
g_K(\zeta)\right]}{D_K[g_K(\zeta)]}.
\end{equation}
Also, note that by the definition of $a_\delta$,
\begin{equation}\label{2.14}
\frac{a_\delta(x^{(k)})}{a_{\delta}(x)}=\prod_{j\neq k}\frac{e^{aw/K}-e^{a\nu_j/K}}{e^{a\nu_k/K}-e^{a\nu_j/K}}.
\end{equation}
If we insert (\ref{2.13}) and (\ref{2.14}) into (\ref{2.11}) and use (\ref{2.9})
we obtain
\begin{align}\label{2.15}
&\mathcal{L}_{K,m,n}(s,u,t,v;\nu)=-q(s,u,t,v)+\frac 1{2\pi i\sqrt{(1-s)(1-t)}}
\int_{\Gamma_c}e^{\frac{t}{2(1-t)}w^2+a_1w-\frac{vw}{1-t}}
\notag\\
&\times
\sum_{k=1}^{n+m} e^{-\frac{s}{2(1-s)}\nu_k^2-a_1\nu_k+\frac{u\nu_k}{1-s}}
\frac{D_K\left[\frac{1+e^{aw/K}\zeta}{1+e^{a\nu_k/K}\zeta}
g_K(\zeta)\right]}{D_K[g_K(\zeta)]}
\prod_{j=1,j\neq k}^{n+m}\frac{e^{aw/K}-e^{a\nu_j/K}}{e^{a\nu_k/K}-e^{a\nu_j/K}}\,dw.
\end{align}
Note that
\begin{equation}\label{2.15.2}
D_K\left[\frac{1+e^{aw/K}\zeta}{1+e^{a\nu_k/K}\zeta}
g_K(\zeta)\right]=e^{a(w-\nu_k)}D_K\left[\frac{1+e^{-aw/K}\zeta}{1+e^{-a\nu_k/K}\zeta}
g_K(\zeta^{-1})\right]
\end{equation}
Set,
\begin{equation}\label{2.16}
F_K(w,k)=\frac{D_K\left[\frac{1+e^{aw/K}\zeta}{1+e^{a\nu_k/K}\zeta}
g_K(\zeta)\right]}{D_K[g_K(\zeta)]},
\end{equation}
for $1\le k\le n$, and
\begin{equation}\label{2.17}
F_K(w,k)=\frac{D_K\left[\frac{1+e^{-aw/K}\zeta}{1+e^{-a\nu_k/K}\zeta}
g_K(\zeta^{-1})\right]}{D_K[g_K(\zeta^{-1})]},
\end{equation}
for $n<k\le n+m$.
Define
\begin{align}\label{2.18}
&L_{K,m,n}(s,u,t,v;\nu)
=\frac 1{2\pi i\sqrt{(1-s)(1-t)}}
\int_{\Gamma_c}e^{\frac{t}{2(1-t)}w^2+a_1w-\frac{vw}{1-t}}
\notag\\
&\times
\sum_{k=1}^{n} e^{-\frac{s}{2(1-s)}\nu_k^2-a_1\nu_k+\frac{u\nu_k}{1-s}}F_K(w,k)
\prod_{j=1,j\neq k}^{n+m}\frac{e^{aw/K}-e^{a\nu_j/K}}{e^{a\nu_k/K}-e^{a\nu_j/K}}\,dw
\end{align}
and
\begin{align}\label{2.19}
&\hat{L}_{K,m,n}(s,u,t,v;\nu)
=\frac 1{2\pi i\sqrt{(1-s)(1-t)}}
\int_{\Gamma_c}e^{\frac{t}{2(1-t)}w^2+a_2w-\frac{vw}{1-t}}
\notag\\
&\times
\sum_{k=n+1}^{n+m} e^{-\frac{s}{2(1-s)}\nu_k^2-a_2\nu_k+\frac{u\nu_k}{1-s}}F_K(w,k)
\prod_{j=1,j\neq k}^{n+m}\frac{e^{aw/K}-e^{a\nu_j/K}}{e^{a\nu_k/K}-e^{a\nu_j/K}}\,dw.
\end{align}
Then,
\begin{equation}\label{2.20}
\mathcal{L}_{K,m,n}(s,u,t,v;\nu)=-q(s,u,t,v)+L_{K,m,n}(s,u,t,v;\nu)+\hat{L}_{K,m,n}(s,u,t,v;\nu).
\end{equation}
This representation is useful for the analysis of the limit $K\to\infty$.

\subsection{Proof of theorem \ref{thm1.2}}\label{sect2.2}

We want to take the limit $K\to\infty$ in (\ref{2.18}) and (\ref{2.19}). Before we can do that we
rewrite the Toeplitz determinants in (\ref{2.16}) and (\ref{2.17}) 
using the Geronimo-Case/Borodin-Okounkov (GCBO)
identity, \cite{GC79}, \cite{BO99}. Consider first the kernel $L_{K,m,n}(s,u,t,v;\nu)$.
To simplify the notation we write
\begin{align}
\alpha&=e^{aw/K},\,\quad \beta=e^{az/K},
\notag\\
\gamma_j&=e^{a\nu_j/K},\,\, 1\le j\le n, \quad \delta_j=e^{-a\nu_{n+j}/K},\,\, 1\le j\le m.
\notag
\end{align}
We assume that $\re z<0$ and $\re w<0$ so that all numbers have absolute value less than 1.
This means that we assume that $c<0$ in $\Gamma_c$ in (\ref{2.18}).
The function $g_K$ in (\ref{2.12}) can now be written
\begin{equation}\label{2.21}
g_K(\zeta)=\prod_{j=1}^n(1+\gamma_j\zeta)\prod_{j=1}^m(1+\delta_j\zeta^{-1}).
\end{equation}
Note that $\beta=\gamma_k$ if $z=\nu_k$. 

We want to express
\begin{equation}
D_K\left[\frac{1+\alpha\zeta}{1+\beta\zeta}g_K(\zeta)\right]
\notag
\end{equation}
using the GCBO identity. We use the formulation given in \cite{BW00}. Let
\begin{equation}
\phi(\zeta)=\frac{1+\alpha\zeta}{1+\beta\zeta}g_K(\zeta)=\phi_+(\zeta)\phi_-(\zeta),
\notag
\end{equation}
where
\begin{align}
\phi_+(\zeta)&=\frac{1+\alpha\zeta}{1+\beta\zeta}\prod_{j=1}^n(1+\gamma_j\zeta),
\notag\\
\phi_-(\zeta)&=\prod_{j=1}^m(1+\delta_j\zeta^{-1}).
\notag
\end{align}
Note that $\phi_+$ is analytic and non-zero in $|\zeta|<1$, and $\phi_-$ is analytic and non-zero
in $|\zeta|>1$. Also, $\phi_+(0)=\phi_-(\infty)=1$. Let
\begin{equation}\label{2.21'}
\mathcal{K}(r,s)=\sum_{\ell=1}^\infty \left(\frac{\phi_-}{\phi_+}\right)_{r+\ell}
 \left(\frac{\phi_+}{\phi_-}\right)_{-\ell-s}
\end{equation}
and
\begin{equation}
Z=\sum_{j=1}^\infty j(\log\phi)_j(\log\phi)_{-j}.
\notag
\end{equation}
Then the GCBO identity states that
\begin{equation}\label{2.22}
D_K[\phi(\zeta)]=e^Z\det(I-\mathcal{K})_{\bar{\ell}^2(K)},
\end{equation}
where we have introduced the notation
$\bar{\ell}^2(K)=\ell^2(\{K,K+1,\dots\})$.

A computation gives
\begin{equation}\label{2.23}
e^Z=\prod_{j=1}^m\frac{1-\beta\delta_j}{1-\alpha\delta_j}\prod_{i=1}^n\prod_{j=1}^m(1-\gamma_i\delta_j)^{-1}.
\end{equation}
We also get
\begin{equation}\label{2.24}
\left(\frac{\phi_-}{\phi_+}\right)_{r+\ell}=\frac 1{2\pi i}\int_{C_{s_1}}\frac{1+\beta\zeta}{1+\alpha\zeta}
\prod_{j=1}^m(1+\delta_j\zeta^{-1})\prod_{j=1}^n(1+\gamma_j\zeta)^{-1}\zeta^{-(r+\ell)}\frac{d\zeta}{\zeta},
\end{equation}
where $C_s$ denotes a circle of radius $s$ around 0. We have to assume that $|\alpha s_1|<1$, $\gamma_js_1<1$,
$1\le j\le n$, i.e.
\begin{equation}\label{2.25}
s_1<e^{-a\re w/K}, e^{-a\nu_j/K},\,\,1\le j\le n.
\end{equation}
Furthermore,
\begin{equation}\label{2.26}
\left(\frac{\phi_+}{\phi_-}\right)_{-s-\ell}=\frac 1{2\pi i}\int_{C_{s_2}}\frac{1+\alpha\omega}{1+\beta\omega}
\prod_{j=1}^m(1+\delta_j\omega^{-1})^{-1}\prod_{j=1}^n(1+\gamma_j\omega)\omega^{s+\ell}\frac{d\omega}{\omega},
\end{equation}
where we require $|\beta s_2|<1$ and $|\delta s_2^{-1}|<1$, i.e.
\begin{equation}\label{2.27}
e^{-a\nu_{n+j}/K}<s_2<e^{-a\re z/K}, \quad 1\le j\le m.
\end{equation}
Inserting (\ref{2.24}) and (\ref{2.26}) into (\ref{2.21'}) we obtain (after changing $\zeta$ to $-\zeta$ and
$\omega$ to $-\omega$),
\begin{equation}\label{2.28}
\mathcal{K}(r,s)=\frac{(-1)^{r-s}}{(2\pi i)^2}\int_{C_{s_1}}\,d\zeta\int_{C_{s_2}}\,d\omega
\frac{(1-\beta\zeta)(1-\alpha\omega)}{(1-\alpha\zeta)(1-\beta\omega)}\frac{\omega^s}{\zeta^{r+1}}
\frac 1{\zeta-\omega}\frac{\tilde{H}_{m,n}(\zeta)}{\tilde{H}_{m,n}(\omega)},
\end{equation}
provided that also
\begin{equation}\label{2.29}
s_1>s_2.
\end{equation}
Here,
\begin{equation}\label{2.29'}
\tilde{H}_{m,n}(\zeta)=\prod_{j=1}^m(1-\delta_j\zeta^{-1})\prod_{j=1}^n(1-\gamma_j\zeta)^{-1}.
\end{equation}

Hence, by (\ref{2.22}) and (\ref{2.23}),
\begin{equation}\label{2.30}
D_K\left[\frac{1+\alpha\zeta}{1+\beta\zeta}g_K(\zeta)\right]
=\prod_{j=1}^m\frac{1-\beta\delta_j}{1-\alpha\delta_j}\prod_{i=1}^n\prod_{j=1}^m(1-\gamma_i\delta_j)^{-1}
\det(I-\mathcal{K})_{\bar{\ell}^2(K)}
\end{equation}
with $\mathcal{K}$ given by (\ref{2.28}), and where $s_1,s_2$ satisfy (\ref{2.25}), (\ref{2.27}) and (\ref{2.29}).
We can take $w=z$ so that $\alpha=\beta$. This gives
\begin{equation}\label{2.31}
D_K\left[g_K(\zeta)\right]
=\prod_{i=1}^n\prod_{j=1}^m(1-\gamma_i\delta_j)^{-1}
\det(I-\mathcal{K}_0)_{\bar{\ell}^2(K)},
\end{equation}
where
\begin{equation}\label{2.32}
\mathcal{K}_0(r,s)=\frac{(-1)^{r-s}}{(2\pi i)^2}\int_{C_{s_1'}}\,d\zeta\int_{C_{s_2'}}\,d\omega
\frac{\omega^s}{\zeta^{r+1}}
\frac 1{\zeta-\omega}\frac{\tilde{H}_{m,n}(\zeta)}{\tilde{H}_{m,n}(\omega)},
\end{equation}
From (\ref{2.25}), (\ref{2.27}) and (\ref{2.29}) we can summarize the restrictions on $s_1, s_2$,
\begin{align}\label{2.33}
s_1&<\min(e^{-a\re w/K},e^{-a\nu_i/K}),
\notag\\
e^{-a\nu_{n+j}/K}<s_2&<\min(s_1,e^{-a\re z/K}),
\end{align}
$1\le i\le n$, $1\le j\le m$.
For $s_1',s_2'$ there are no conditions related to $z$ and $w$ so we get instead
\begin{equation}\label{2.34}
e^{-a\nu_{n+i}/K}<s_2'<s_1'<e^{-a\nu_j/K},
\end{equation}
$1\le i\le m$, $1\le j\le n$.

The factor $(-1)^{r-s}$ in (\ref{2.28}) and (\ref{2.32}) can be removed without changing the Fredholm 
determinant. The change of variables $\omega\to1/\omega$ then gives the following kernel, which we
denote by $\mathcal{K}^{(z,w)}(r,s;K)$ to indicate the dependence on $z,w$ and $K$,
\begin{align}\label{2.35}
&\mathcal{K}^{(z,w)}(r,s;K)
\notag\\
&=\frac{1}{(2\pi i)^2}\int_{C_{s_1}}\,d\zeta\int_{C_{s_3}}\,d\omega
\frac{(1-\beta\zeta)(1-\alpha\omega^{-1})}{(1-\alpha\zeta)(1-\beta\omega^{-1})}\frac{1}{\omega^{s}\zeta^{r+1}}
\frac 1{\zeta\omega-1}\frac{\tilde{H}_{m,n}(\zeta)}{\tilde{H}_{m,n}(\omega^{-1})},
\end{align}
where $s_3=1/s_2$. We get the following condition on $s_3$ from (\ref{2.33}),
\begin{equation}
\max(\frac 1{s_1},e^{a\re z/K})<s_3<e^{a\nu_{n+j}/K},
\notag
\end{equation}
$1\le j\le m$. We can take $s_1,s_3>1$ in (\ref{2.35}) and notice that we are interested in $r,s\ge K$,
where $K$ is very large, actually tending to infinity. 
Looking at the integrand in (\ref{2.35}) we see that in $\zeta$ we have poles at $\zeta=0$, $\zeta=1/\omega$,
$\zeta=1/\alpha\approx 1-aw/K$ and $\zeta=1/\gamma_j\approx1-a\nu_j/K$, $1\le j\le n$. In the $\omega$ variable
we have poles at $\omega=0$, $\omega=1/\zeta$, $\omega=\beta\approx 1+az/K$ and $\omega=1/\delta_j\approx 1+a\nu_{n+j}/K$, 
$1\le j\le m$, compare (\ref{2.33}). Hence, we can, for large $r,s$, deform $C_{s_1}$ and
$C_{s_3}$ through infinity without hitting any poles to curves given by
\begin{equation}\label{2.36}
\zeta=1-\frac{a\zeta'}{K}, \,\,\zeta'\in \gamma_{1,w},\quad \omega=1+\frac{a\omega'}K, \,\,\omega'\in\gamma_2,
\end{equation}
where $\gamma_{1,w}$ and $\gamma_2$ are as defined in Remark \ref{rem1.1}. 
We obtain
\begin{align}\label{2.37}
&\mathcal{K}^{(z,w)}(r,s;K)=\frac {-a}{(2\pi i)^2K}\int_{\gamma_{1,w}}\,d\zeta'\int_{\gamma_2}\,d\omega'
\frac{(1-\beta(1-a\zeta'/K))(1-\alpha(1+a\omega'/K)^{-1})}
{(1-\alpha(1-a\zeta'/K))(1-\beta(1+a\omega'/K)^{-1})}
\notag\\
&\times \frac 1{(1+a\omega'/K)^{s}(1-a\zeta'/K)^{r+1}}
\frac 1{\frac Ka[(1-a\zeta'/K)(1+a\omega'/K)-1]}
\frac{\tilde{H}_{m,n}(1-a\zeta'/K)}{\tilde{H}_{m,n}((1+a\omega'/K)^{-1})}.
\end{align}
Define
\begin{equation}\label{2.38}
\tilde{\mathcal{K}}^{(z,w)}(x,y;K)=K\mathcal{K}^{(z,w)}([xK],[yK];K).
\end{equation}
 We can expand the determinant $\det(I-\mathcal{K}^{(z,w)})_{\bar{\ell}(K)}$ in a Fredholm expansion and use (\ref{2.38})
to obtain
\begin{align}\label{2.38'}
\det(I-\mathcal{K}^{(z,w)})_{\bar{\ell}^2(K)}&=\sum_{m=0}^\infty\frac{(-1)^m}{m!}\sum_{r_1,\dots r_m=K}^\infty
\det(\mathcal{K}^{(z,w)}(r_i,r_j;K))_{m\times m}
\notag\\
&=\sum_{m=0}^\infty\frac{(-1)^m}{m!}\int_{[1,\infty)^m}\det(\tilde{\mathcal{K}}^{(z,w)}(x_i,x_j;K))_{m\times m}
\,d^mx.
\end{align}
In (\ref{2.38}) we can take the limit $K\to\infty$. Note that
\begin{align}
\tilde{H}_{m,n}(1-a\zeta'/K)&=\prod_{j=1}^m (1-e^{-a\nu_{n+j}/K}(1-a\zeta'/K)^{-1})\prod_{j=1}^n
\left(1-e^{a\nu_j/K}(1-a\zeta'/K)\right)^{-1}
\notag\\
&\sim\left(\frac{a}{K}\right)^{m-n}\left(\prod_{j=1}^m\nu_{n+j}\prod_{j=1}^n\nu_j^{-1}\right) H_{m,n}(\zeta),
\notag
\end{align}
as $K\to\infty$, where
\begin{equation}\label{2.41}
H_{m,n}(\zeta)=\prod_{j=1}^m(1-\zeta/\nu_{n+j})\prod_{j=1}^n(\zeta/\nu_j-1)^{-1}.
\end{equation}
Also,
\begin{align}
\tilde{H}_{m,n}((1-a\omega'/K)^{-1})&=\prod_{j=1}^m (1-e^{-a\nu_{n+j}/K}(1+a\omega'/K))\prod_{j=1}^n
\left(1-e^{a\nu_j/K}(1+a\omega'/K)^{-1}\right)^{-1}
\notag\\
&\sim\left(\frac{a}{K}\right)^{m-n}\left(\prod_{j=1}^m\nu_{n+j}\prod_{j=1}^n\nu_j^{-1}\right) H_{m,n}(\omega),
\notag
\end{align}
as $K\to\infty$. The other parts of the integrand in (\ref{2.37}) are easy to analyze as $K\to\infty$ and it is not hard to see
that for any fixed $x,y,z,w$ all the limits hold uniformly for $\zeta'\in\gamma_{1,w}$ and $\omega'\in\gamma_2$. In this way we 
see that pointwise in $x,y,z,w$,
\begin{equation}\label{2.39}
\lim_{K\to\infty}\tilde{\mathcal{K}}^{(z,w)}(x,y;K)=M^{(z,w)}(x,y),
\end{equation}
where
\begin{equation}\label{2.40}
M^{(z,w)}(x,y)=\frac{a}{(2\pi i)^2}\int_{\gamma_{1,w}}\,d\zeta\int_{\gamma_2}\,d\omega
\frac{(z-\zeta)(w-\omega)}{(w-\zeta)(z-\omega)}\frac{e^{ax\zeta-ay\omega}}{\zeta-\omega}
\frac{H_{m,n}(\zeta)}{H_{m,n}(\omega)},
\end{equation}
for $\re z<0$, $\re w<0$, $x,y\ge 1$. 
Note that
\begin{equation}
(1-a\zeta'/K)^{-[xK]}=\exp(-[xK]\log(1-a\zeta'/K))=\exp([xK]\sum_{k=1}^\infty(a\zeta'/K)^m/m)
\notag
\end{equation}
for $\zeta'\in\gamma_{1,w}$ and large enough $K$. Since $\gamma_{1,w}$ lies in the open left half plane
there is an $\epsilon>0$ such that $\re\zeta'\le -\epsilon<0$ for all $\zeta'\in\gamma_{1,w}$. Hence,
\begin{equation}
\left|(1-a\zeta'/K)^{-[xK]}\right|\le e^{-a\epsilon x/2}
\notag
\end{equation}
for all $\zeta'\in\gamma_{1,w}$ and large $K$. In this way we see from (\ref{2.37}) that there is a constant $C$ 
independent of $K$ such that
\begin{equation}\label{2.42}
\left|\tilde{\mathcal{K}}^{(z,w)}(x,y;K)\right|\le C e^{-a\epsilon(x+y)/2}
\end{equation}
for all $x,y\ge 1$, $K$ sufficiently large.

It follows from (\ref{2.38'}), (\ref{2.39}), (\ref{2.42}), the Hadamard inequality and the dominated convergence theorem
that
\begin{align}\label{2.43}
&\lim_{K\to\infty}\det(I-\mathcal{K}^{(z,w)})_{\bar{\ell}^2(K)}\notag\\
&=\sum_{m=0}^\infty\frac{(-1)^m}{m!}
\int_{[1,\infty)^m}\det(M^{(z,w)}(x_i,x_j))_{m\times m}\,d^mx,
\end{align}
where $\mathcal{K}^{(z,w)}$ is given by (\ref{2.35}) and $M^{(z,w)}$ by (\ref{2.40}). By lemma \ref{lem4.1} (a) $M^{(z,w)}$
is a finite rank operator and hence the Fredholm determinant exists and equals its Fredholm expansion.
We obtain
\begin{equation}\label{2.49}
\lim_{K\to\infty}\det(I-\mathcal{K}^{(z,w)})_{\bar{\ell}^2(K)}=
\det(I-M^{(z,w)})_{L^2[1,\infty)}.
\end{equation}
Set $M_0=M^{(z,z)}$ so that $M_0$ is given by (\ref{1.7}). It then follows from (\ref{2.49})
and (\ref{2.32}) that 
\begin{equation}\label{2.50}
\lim_{K\to\infty}\det(I-\mathcal{K}_0)_{\bar{\ell}^2(K)}=
\det(I-M_0)_{L^2[1,\infty)},
\end{equation}
and we know from lemma \ref{lem4.1} (b) that the right hand side is positive.

If we combine (\ref{2.16}), (\ref{2.30}) and (\ref{2.31}) we see that
\begin{equation}
F_K(w,k)=\prod_{j=1}^m\frac{1-\gamma_k\delta_j}{1-\alpha\delta_j}\frac{\det(I-\mathcal{K}^{(\nu_k,w)})_
{\bar{\ell}^2(K)}}{\det(I-\mathcal{K}_0)_{\bar{\ell}^2(K)}}.
\notag
\end{equation}
Since
\begin{equation}
\lim_{K\to\infty}\prod_{j=1}^m\frac{1-\gamma_k\delta_j}{1-\alpha\delta_j}
=\prod_{j=1}^m\frac{\nu_{n+j}-\nu_k}{\nu_{n+j}-w},
\notag
\end{equation}
we see from (\ref{2.49}) and (\ref{2.50}) that, pointwise in $w$, $\re w<0$,
\begin{equation}\label{2.51}
\lim_{K\to\infty}F_K(w,k)=\prod_{j=1}^m\frac{\nu_{n+j}-\nu_k}{\nu_{n+j}-w}
\frac{\det(I-M^{(\nu_k,w)})_{L^2[1,\infty)}}{\det(I-M_0)_{L^2[1,\infty)}}
\end{equation}
for $1\le k\le n$.

We can now use (\ref{2.51}), the estimates (\ref{2.17:4}), (\ref{2.17:6}), which will be proved
in section \ref{sect4}, and the dominated 
convergence theorem to conclude that, pointwise in $s,u,t,v$,
\begin{align}\label{2.52}
&\lim_{K\to\infty}L_{m,n,K}(s,u,v,t;\nu)=\frac 1{2\pi i\sqrt{(1-s)(1-t)}}\int_{\Gamma_c}\,dw
e^{\frac{t}{2(1-t)}w^2+a_1w-\frac{vw}{1-t}}
\notag\\
&\times\sum_{k=1}^n e^{-\frac s{2(1-s)}\nu_k^2-a_1\nu_k+\frac{u\nu_k}{1-s}}
\prod_{j=1, j\neq k}^n\frac{w-\nu_j}{\nu_k-\nu_j}
\frac{\det(I-M^{(\nu_k,w)})_{L^2[1,\infty)}}{\det(I-M_0)_{L^2[1,\infty)}}
\end{align}
for $c<0$.

The argument used above to analyze $L_{m,n,K}(s,u,v,t;\nu)$ can also be applied to 
$\hat{L}_{m,n,K}(s,u,v,t;\nu)$. If we look at (\ref{2.17}) we see that we can perform exactly
the same argument with the changes $n\leftrightarrow m$, $\nu_1,\dots,\nu_n\to
-\nu_{n+1},\dots, -\nu_{n+m}$ and $\nu_{n+1},\dots,\nu_{n+m}\to-\nu_1,\dots,-\nu_n$, as well as $w\to -w$
and $z\to -z$. Define,
\begin{equation}\label{2.53}
\hat{M}^{(z,w)}(x,y)=\frac{a}{(2\pi i)^2}\int_{\gamma_{1}}\,d\zeta\int_{\gamma_{2,w}}\,d\omega
\frac{(z-\zeta)(w-\omega)}{(w-\zeta)(z-\omega)}\frac{e^{-ax\zeta+ay\omega}}{\omega-\zeta}
\frac{H_{m,n}(\omega)}{H_{m,n}(\zeta)},
\end{equation}
for $x,y\ge 1$, $\re z>0$, $\re w>0$. Also, set $\hat{M}_0=\hat{M}^{(z,z)}$ so that $\hat{M}_0$ is given by (\ref{1.12.6}).
We get,
\begin{equation}\label{2.54}
\lim_{K\to\infty}F_K(w,k)=\prod_{j=1}^n\frac{\nu_{j}-\nu_k}{\nu_{j}-w}
\frac{\det(I-\hat{M}^{(\nu_k,w)})_{L^2[1,\infty)}}{\det(I-\hat{M}_0)_{L^2[1,\infty)}}
\end{equation}
pointwise in $w$ for $\re w>0$, $n<k\le n+m$. Similarly to above we obtain, pointwise in $s,u,t,v$,
\begin{align}\label{2.55}
&\lim_{K\to\infty}\hat{L}_{m,n,K}(s,u,v,t;\nu)=\frac 1{2\pi i\sqrt{(1-s)(1-t)}}\int_{\Gamma_c}\,dw
e^{\frac{t}{2(1-t)}w^2+a_2w-\frac{vw}{1-t}}
\notag\\
&\times\sum_{k=n+1}^{n+m} e^{-\frac s{2(1-s)}\nu_k^2-a_2\nu_k+\frac{u\nu_k}{1-s}}
\prod_{j=n+1, j\neq k}^{n+m}\frac{w-\nu_j}{\nu_k-\nu_j}
\frac{\det(I-\hat{M}^{(\nu_k,w)})_{L^2[1,\infty)}}{\det(I-\hat{M}_0)_{L^2[1,\infty)}}
\end{align}
for $c>0$. It follows from the residue theorem that the right hand side of (\ref{2.52}) can be written as
\begin{align}\label{2.56}
&L_{m,n}(s,u,v,t;\nu)=\frac 1{(2\pi i)^2\sqrt{(1-s)(1-t)}}\int_{\gamma_1}\,dz\int_{\Gamma_{c_1}}\,dw
e^{\frac{t}{2(1-t)}w^2+a_1w-\frac{vw}{1-t}}
\notag\\
&e^{-\frac s{2(1-s)}z^2-a_1z+\frac{uz}{1-s}}\frac 1{w-z}
\prod_{j=1}^n\frac{w-\nu_j}{z-\nu_j}
\frac{\det(I-M^{(z,w)})_{L^2[1,\infty)}}{\det(I-M_0)_{L^2[1,\infty)}},
\end{align}
and similarly for the right hand side of (\ref{2.55}) we obtain
\begin{align}\label{2.57}
&\hat{L}_{m,n}(s,u,v,t;\nu)=\frac 1{(2\pi i)^2\sqrt{(1-s)(1-t)}}\int_{\gamma_2}\,dz\int_{\Gamma_{c_2}}\,dw
e^{\frac{t}{2(1-t)}w^2+a_2w-\frac{vw}{1-t}}
\notag\\
&e^{-\frac s{2(1-s)}z^2-a_2z+\frac{uz}{1-s}}\frac 1{w-z}
\prod_{j=n+1}^{n+m}\frac{w-\nu_j}{z-\nu_j}
\frac{\det(I-\hat{M}^{(z,w)})_{L^2[1,\infty)}}{\det(I-\hat{M}_0)_{L^2[1,\infty)}},
\end{align}
Here $c_1<0$ is chosen so that $\Gamma_{c_1}$ lies to the right of $\gamma_1$ and $c_2>0$ is chosen so that
$\Gamma_{c_2}$ lies to the left of $\gamma_2$. 

To prove theorem \ref{thm1.2} it remains to show that these expressions agree with those given in 
(\ref{1.12}) and (\ref{1.12.2}) respectively. Since
\begin{equation}
\frac{(z-\zeta)(w-\omega)}{(w-\zeta)(z-\omega)}=1-\frac{(w-z)(\zeta-\omega)}{(\zeta-w)(\omega-z)}
\notag
\end{equation}
we see that
\begin{equation}\label{2.60}
M^{(z,w)}=M_0-(w-z)b_1^w\otimes b_2^z
\end{equation}
as operators on $L^2[1,\infty)$, where
\begin{equation}\label{2.58}
b_1^w(x)=\frac{\sqrt{a}}{2\pi i}\int_{\gamma_{1,w}}H_{m,n}(\zeta)\frac{e^{ax\zeta}}{\zeta-w}\,d\zeta
\end{equation}
and
\begin{equation}\label{2.59}
b_2^z(y)=\frac{\sqrt{a}}{2\pi i}\int_{\gamma_2}H_{m,n}(\omega)^{-1}\frac{e^{-ay\omega}}{\omega-z}\,d\omega.
\end{equation}
Define
\begin{equation}\label{2.61}
F_{u,s}(z)=\frac {d_1}{2\pi i\sqrt{1-s}}e^{-\frac s{2(1-s)}z^2-a_1z+\frac{uz}{1-s}}
\prod_{j=1}^n\left(\frac z{\nu_j}-1\right)^{-1}
\end{equation}
and
\begin{equation}\label{2.62}
G_{v,t}(w)=\frac {d_2}{2\pi i\sqrt{1-t}}e^{\frac{t}{2(1-t)}w^2+a_1w-\frac{vw}{1-t}}\prod_{j=1}^n\left(\frac w{\nu_j}
-1\right).
\end{equation}
From (\ref{2.56}) we now obtain
\begin{align}\label{2.63}
d_1d_2 L_{m,n}(s,u,t,v;\nu)&=\int_{\gamma_1}\,dz\int_{\Gamma_{c_1}}\,dw F_{u,s}(z)G_{v,t}(w)\frac 1{w-z}
\notag\\
&\times\frac {\det(I-M_0+(w-z)b_1^w\otimes b_2^z)_{L^2[1,\infty)}}{\det(I-M_0)_{L^2[1,\infty)}}.
\end{align}
Next we use the following lemma that will be proved in section \ref{sect4}.

\begin{lemma}\label{lem2.2}
The following identity holds
\begin{align}\label{2.64}
&\int_{\gamma_1}\,dz\int_{\Gamma_{c_1}}\,dw F_{u,s}(z)G_{v,t}(w)\frac 1{w-z}
\det(I-M_0+(w-z)b_1^w\otimes b_2^z)_{L^2[1,\infty)}
\notag\\
&=\left[\int_{\gamma_1}\,dz\int_{\Gamma_{c_1}}\,dw F_{u,s}(z)G_{v,t}(w)\frac 1{w-z}-1\right]
\det(I-M_0)_{L^2[1,\infty)}
\notag\\
&+\det\left(I-M_0+\left(\int_{\Gamma_{c_1}}G_{v,t}(w)b_1^w\,dw\right)\otimes
\left(\int_{\gamma_1}F_{u,s}(z)b_2^z\,dz\right)\right)_{L^2[1,\infty)}.
\end{align}
\end{lemma}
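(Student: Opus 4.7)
The plan is to apply the rank-one perturbation identity for Fredholm determinants under the integral sign and then collapse the result via Fubini. For any trace-class operator $A$ on $L^2[1,\infty)$ with $I-A$ invertible and any $u,v\in L^2[1,\infty)$, one has the classical identity
\[
\det(I-A+u\otimes v) = \det(I-A)\bigl(1+\langle v,(I-A)^{-1}u\rangle\bigr),
\]
where $u\otimes v$ denotes the rank-one operator $f\mapsto \langle v,f\rangle u$ and $\langle\cdot,\cdot\rangle$ is the bilinear $L^2$ pairing. By Lemma \ref{lem1.1}, $M_0$ has finite rank and $\det(I-M_0)_{L^2[1,\infty)}>0$, so $R:=(I-M_0)^{-1}$ is well defined on $L^2[1,\infty)$.

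Apply this identity inside the integrand of (\ref{2.64}) with $u=(w-z)b_1^w$ and $v=b_2^z$, obtaining
\[
\det\bigl(I-M_0+(w-z)b_1^w\otimes b_2^z\bigr) = \det(I-M_0)\bigl[1+(w-z)\langle b_2^z,R b_1^w\rangle\bigr].
\]
After multiplying by $F_{u,s}(z)G_{v,t}(w)/(w-z)$, the factor $(w-z)$ cancels in the second summand, and the left-hand side of (\ref{2.64}) becomes
\begin{align*}
&\det(I-M_0)\int_{\gamma_1}\int_{\Gamma_{c_1}}\frac{F_{u,s}(z)G_{v,t}(w)}{w-z}\,dw\,dz \\
&\qquad + \det(I-M_0)\int_{\gamma_1}\int_{\Gamma_{c_1}}F_{u,s}(z)G_{v,t}(w)\langle b_2^z,R b_1^w\rangle\,dw\,dz.
\end{align*}
Bringing the scalar integrals through the bilinear pairing (a Fubini step), the second term equals $\det(I-M_0)\,\langle V,RU\rangle$, where
\[
U=\int_{\Gamma_{c_1}}G_{v,t}(w)b_1^w\,dw,\qquad V=\int_{\gamma_1}F_{u,s}(z)b_2^z\,dz,
\]
viewed as elements of $L^2[1,\infty)$.

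A second application of the rank-one formula with the vectors $(U,V)$ gives $\det(I-M_0+U\otimes V)=\det(I-M_0)+\det(I-M_0)\,\langle V,RU\rangle$. Substituting, the left-hand side of (\ref{2.64}) becomes
\[
\left[\int_{\gamma_1}\int_{\Gamma_{c_1}}\frac{F_{u,s}(z)G_{v,t}(w)}{w-z}\,dw\,dz-1\right]\det(I-M_0) + \det(I-M_0+U\otimes V),
\]
which is precisely the right-hand side of (\ref{2.64}). The main obstacle will be the Fubini step: one must check that $U,V\in L^2[1,\infty)$ and that $F_{u,s}(z)G_{v,t}(w)\langle b_2^z,Rb_1^w\rangle$ is absolutely integrable in $(z,w)$. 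This relies on uniform exponential decay of $b_1^w(x)$ and $b_2^z(y)$ in $x,y\ge 1$ along the contours $\Gamma_{c_1}$ and $\gamma_1$ (coming from $c_1<0$ and the location of $\gamma_1$ in the left half-plane), together with the Gaussian decay of $F_{u,s}$ along $\gamma_1$ and of $G_{v,t}$ along $\Gamma_{c_1}$; these are exactly the kinds of estimates promised in Section \ref{sect4}.
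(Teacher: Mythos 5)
Your proof is correct, and it rests on the same underlying principle as the paper's: the expression $\det(I-M_0+(w-z)b_1^w\otimes b_2^z)-\det(I-M_0)$ is, after the factor $(w-z)$ is stripped off, bilinear in the pair $(b_1^w,b_2^z)$, so the $z$- and $w$-integrations can be pushed inside onto $b_1^w$ and $b_2^z$ individually, and the result reassembled into a Fredholm determinant. The difference is in the machinery used to exhibit this bilinearity. You invoke the classical rank-one perturbation identity $\det(I-A+u\otimes v)=\det(I-A)\bigl(1+\langle v,(I-A)^{-1}u\rangle\bigr)$ and work with the resolvent $R=(I-M_0)^{-1}$, which exists thanks to Lemma \ref{lem1.1}. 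The paper instead exploits the fact (Lemma \ref{lem4.1}(a)) that $M_0$ has finite rank, writes $M_0=\sum_{j=1}^p\phi_j\otimes\psi_j$, and converts everything to explicit $(p+1)\times(p+1)$ matrix determinants; the key step is then column-linearity of a bordered determinant, which yields the bilinearity without ever inverting $I-M_0$. Your route is more streamlined if one is comfortable with the operator formalism, while the paper's is more elementary and self-contained, reducing everything to finite linear algebra. Both need the same integrability input for the Fubini step, which you correctly flag and which is supplied by the exponential decay estimates of Section \ref{sect4} (e.g.\ Lemma \ref{lem4.1}(c)) together with the Gaussian decay of $F_{u,s}$ and $G_{v,t}$ along their contours.
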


Now, 
\begin{align}\label{2.65}
\int_{\gamma_1}F_{u,s}(z)b_2^z\,dz&=\frac {d_1\sqrt{a}}{(2\pi i)^2\sqrt{1-s}}\int_{\gamma_1}\,dz
\int_{\gamma_2}\,d\omega e^{-\frac s{2(1-s)}z^2-a_1z+\frac{uz}{1-s}}
\notag\\
&\times \prod_{j=1}^n\left(\frac z{\nu_j}-1\right)^{-1}H_{m,n}(\omega)^{-1}\frac{e^{-ay\omega}}{\omega-z}=\mathcal{C}_{u,s}(y),
\end{align}
where $\mathcal{C}_{u,s}(y)$ is given by (\ref{1.6}). Also,
\begin{align}
\int_{\Gamma_{c_1}}G_{v,t}(w)b_1^w\,dw&=\frac {d_2\sqrt{a}}{(2\pi i)^2\sqrt{1-t}}
\int_{\Gamma_{c_1}}dw\int_{\gamma_{1,w}}\,d\zeta e^{\frac t{2(1-t)}w^2+a_1w-\frac{vw}{1-t}}
\notag\\
&\times\prod_{j=1}^n\frac{w/\nu_j-1}{\zeta/\nu_j-1}\prod_{j=1}^m(1-\zeta/\nu_{j+n})\frac {e^{ax\zeta}}{\zeta-w}.
\notag
\end{align}
We can move $\gamma_{1,w}$ to a contour $\gamma'_{1,w}$ in the left half plane that contains
$\nu_1,\dots,\nu_n$ but does not contain $w$. This gives,
\begin{align}\label{2.66}
&\int_{\Gamma_{c_1}}G_{v,t}(w)b_1^w\,dw=\frac {d_2\sqrt{a}}{(2\pi i)^2\sqrt{1-t}}
\int_{\Gamma_{c_1}}\,dw\int_{\gamma'_{1,w}}\,d\zeta e^{\frac t{2(1-t)}w^2+a_1w-\frac{vw}{1-t}}
\notag\\
&\times\prod_{j=1}^n\frac{w/\nu_j-1}{\zeta/\nu_j-1}\prod_{j=1}^m(1-\zeta/\nu_{j+n})\frac {e^{ax\zeta}}{\zeta-w}
\notag\\
&+\frac {d_2\sqrt{a}}{2\pi i\sqrt{1-t}}\int_{\Gamma_{c_1}}\,dwe^{\frac t{2(1-t)}w^2+a_1w+\frac{vw}{1-t}+axw}
\prod_{j=1}^m(1-w/\nu_{j+n}).
\end{align}
In the first integral in the right hand side of (\ref{2.66})
we can choose $c_1>b_2$, where $b_2$ is defined in (\ref{1.1''}), and then deform $\gamma'_{1,w}$ to $\gamma_1$. We see that
\begin{equation}
\int_{\Gamma_{c_1}}G_{v,t}(w)b_1^w\,dw=\mathcal{B}_{v,t}(x)+\beta_{v,t}(x),
\notag
\end{equation}
where $\mathcal{B}_{v,t}$ is given by (\ref{1.4}) and $\beta_{v,t}$ by (\ref{1.5}).
It follows that the right hand side of (\ref{1.12}) agrees with the right hand side
of (\ref{2.63}).

For the rewriting of $\hat{L}_{m,n}$ we obtain instead of 
(\ref{2.60}) the kernel
\begin{equation}\label{2.67}
\hat{M}^{(z,w)}=\hat{M}_0+(w-z)\hat{b}_1^w\otimes \hat{b}_2^z,
\end{equation}
where $\hat{M}_0$ is given by (\ref{1.12.6}),
\begin{equation}\label{2.68}
\hat{b}_1^w(x)=\frac{\sqrt{a}}{2\pi i}\int_{\gamma_{2,w}}H_{m,n}(\zeta)^{-1}\frac{e^{-ax\zeta}}{\zeta-w}\,d\zeta
\end{equation}
and
\begin{equation}\label{2.69}
\hat{b}_2^z(y)=\frac{\sqrt{a}}{2\pi i}\int_{\gamma_1}H_{m,n}(\omega)\frac{e^{ay\omega}}{\omega-z}\,d\omega.
\end{equation}
Also, instead of (\ref{2.61}) and (\ref{2.62}) we need
\begin{equation}\label{2.70}
\hat{F}_{u,s}(z)=\frac {d_1}{2\pi i\sqrt{1-s}}e^{-\frac s{2(1-s)}z^2-a_2z+\frac{uz}{1-s}}
\prod_{j=1}^m\left(1-\frac z{\nu_{n+j}}\right)^{-1}
\end{equation}
\begin{equation}\label{2.71}
G_{v,t}(w)=\frac {d_2}{2\pi i\sqrt{1-t}}e^{\frac{t}{2(1-t)}w^2+a_2w-\frac{vw}{1-t}}\prod_{j=1}^m\left(1-\frac w{\nu_{n+j}}
\right).
\end{equation}
From (\ref{2.57}) we obtain
\begin{align}\label{2.72}
d_1d_2 \hat{L}_{m,n}(s,u,t,v;\nu)&=\int_{\gamma_2}\,dz\int_{\Gamma_{c_2}}\,dw \hat{F}_{u,s}(z)
\hat{G}_{v,t}(w)\frac 1{w-z}
\notag\\
&\times\frac {\det(I-\hat{M}_0-(w-z)\hat{b}_1^w\otimes \hat{b}_2^z)_{L^2[1,\infty)}}{\det(I-\hat{M}_0)_{L^2[1,\infty)}}.
\end{align}
Using lemma \ref{lem2.2} again we get
\begin{align}\label{2.73}
&\int_{\gamma_2}\,dz\int_{\Gamma_{c_2}}\,dw \hat{F}_{u,s}(z)
\hat{G}_{v,t}(w)\frac 1{w-z}\det(I-\hat{M}_0-(w-z)\hat{b}_1^w\otimes \hat{b}_2^z)_{L^2[1,\infty)}
\notag\\
&=\left[\int_{\gamma_2}\,dz\int_{\Gamma_{c_2}}\,dw \hat{F}_{u,s}(z)
\hat{G}_{v,t}(w)\frac 1{w-z}-1\right]\det(I-\hat{M}_0)_{L^2[1,\infty)}
\notag\\
&+\det\left(I-\hat{M}_0-\left(\int_{\Gamma_{c_2}}\hat{G}_{v,t}(w)\hat{b}_1^w\,dw\right)
\otimes\left(\int_{\gamma_2}\hat{F}_{u,s}(z)\hat{b}_2^z\,dz\right)
\right)_{L^2[1,\infty)}.
\end{align}

Now, 
\begin{align}\label{2.74}
\int_{\gamma_2}\hat{F}_{u,s}(z)\hat{b}_2^z\,dz&=\frac {d_1\sqrt{a}}{(2\pi i)^2\sqrt{1-s}}\int_{\gamma_2}\,dz
\int_{\gamma_1}\,d\omega e^{-\frac s{2(1-s)}z^2-a_2z+\frac{uz}{1-s}}
\notag\\
&\times\prod_{j=1}^m\frac {1-\omega/\nu_{n+j}}{1-z/\nu_{j+n}}\prod_{j=1}^n\frac 1{\omega/\nu_j-1}
\frac{e^{ay\omega}}{\omega-z}=\hat{\mathcal{C}}_{u,s}(y),
\end{align}
where $\hat{\mathcal{C}}_{u,s}(y)$ is given by (\ref{1.12.5}). Also,
\begin{align}
\int_{\Gamma_{c_2}}\hat{G}_{v,t}(w)\hat{b}_1^w\,dw&=\frac {d_2\sqrt{a}}{(2\pi i)^2\sqrt{1-t}}
\int_{\Gamma_{c_2}}dw\int_{\gamma_{2,w}}\,d\zeta e^{\frac t{2(1-t)}w^2+a_2w-\frac{vw}{1-t}}
\notag\\
&\times\prod_{j=1}^m\frac{1-w/\nu_{n+j}}{1-\zeta/\nu_{n+j}}\prod_{j=1}^n(\zeta/\nu_{j}-1)\frac {e^{-ax\zeta}}{\zeta-w}.
\notag
\end{align}
We can move $\gamma_{2,w}$ to a contour $\gamma'_{2,w}$ in the right half plane that contains
$\nu_{n+1},\dots,\nu_{n+m}$ but does not contain $w$ in its interior. This gives,
\begin{align}\label{2.75}
\int_{\Gamma_{c_2}}\hat{G}_{v,t}(w)\hat{b}_1^w\,dw&=\frac {d_2\sqrt{a}}{(2\pi i)^2\sqrt{1-t}}
\int_{\Gamma_{c_2}}\,dw\int_{\gamma'_{2,w}}\,d\zeta e^{\frac t{2(1-t)}w^2+a_2w-\frac{vw}{1-t}}
\notag\\
&\times\prod_{j=1}^m\frac{1-w/\nu_{n+j}}{1-\zeta/\nu_{n+j}}\prod_{j=1}^n(\zeta/\nu_{j}-1)
\frac {e^{-ax\zeta}}{\zeta-w}
\notag\\
&+\frac {d_2\sqrt{a}}{2\pi i\sqrt{1-t}}\int_{\Gamma_{c_2}}\,dwe^{\frac t{2(1-t)}w^2+a_2w-\frac{vw}{1-t}-axw}
\prod_{j=1}^n(w/\nu_{j}-1).
\end{align}
In the first integral in the right hand side of (\ref{2.75})
we can choose $c_2<b_2$ and then deform $\gamma'_{2,w}$ to $\gamma_2$. We see that
\begin{equation}
\int_{\Gamma_{c_2}}\hat{G}_{v,t}(w)\hat{b}_1^w\,dw=\hat{\mathcal{B}}_{v,t}(x)+\hat{\beta}_{v,t}(x),
\notag
\end{equation}
where $\hat{\mathcal{B}}_{v,t}$ is given by (\ref{1.12.3}) and $\hat{\beta}_{v,t}$ by (\ref{1.12.4}).
We conclude that the right hand side of (\ref{2.72}) agrees with the right hand side
of (\ref{1.12.2}). This completes the proof of theorem \ref{thm1.2}

\section{Asymptotics and the extended tacnode kernel}\label{sect3}

In this section we will prove theorem \ref{thm1.3} by analyzing the
asymptotics of the kernel $\mathcal{L}_n(s,u,t,v)$ which we get from $\mathcal{L}_{m,n}(s,u,t,v;\nu)$
when we make the special choice $m=n$, $a_1=-a/2=-a_2$, $\nu_1=\dots=\nu_n=b_1=-a/2$,
$\nu_{n+1}=\dots=\nu_{n+m}=b_2=a/2$, where $a>0$. By (\ref{1.16}) it is enough to investigate the
asymptotics of $L_{n}(s,u,t,v)$ and $q(s,u,t,v)$ under the scaling given by (\ref{1.17}). Note that,
by (\ref{1.12}),
\begin{align}\label{3.1} 
d_1d_2L_{n}(s,u,t,v)&=\frac{d_1d_2}{(2\pi i)^2\sqrt{(1-s)(1-t)}}\int_{\Gamma_0}\,dw\int_{\gamma_1}\,dz
\frac 1{w-z}
\notag\\
&\times e^{-\frac{s}{2(1-s)}z^2+az/2+\frac{uz}{1-s}+\frac t{2(1-t)}w^2-aw/2-\frac{vw}{1-t}}
\left(\frac{1+2w/a}{1+2z/a}\right)^n
\notag\\
&-1+\frac{\det\left(I-M_0+(\mathcal{B}_{v,t}+\beta_{v,t})\otimes \mathcal{C}_{u,s}\right)_
{L^2[1,\infty)}}{\det(I-M_0)_{L^2[1,\infty)}}.
\end{align}
In this case,
\begin{align}\label{3.2}
\mathcal{B}_{v,t}(x)&=\frac{d_2\sqrt{a}}{(2\pi i)^2\sqrt{1-t}}\int_{\Gamma_0}\,dw
\int_{\gamma_1}\,d\zeta e^{\frac t{2(1-t)}w^2-aw/2-\frac{vw}{1-t}+ax\zeta}\frac 1{\zeta-w}
\notag\\
&\times\left(\frac{1-2\zeta/a}{1+2\zeta/a}\right)^n(1+2w/a)^n,
\end{align}
\begin{equation}\label{3.3}
\beta_{v,t}(x)=\frac{d_2\sqrt{a}}{2\pi i\sqrt{1-t}}\int_{\Gamma_0}\,dw
e^{\frac t{2(1-t)}w^2-aw/2-\frac{vw}{1-t}+axw}(1-2w/a)^n,
\end{equation}
\begin{align}\label{3.4}
\mathcal{C}_{u,s}(y)&=\frac{d_1\sqrt{a}}{(2\pi i)^2\sqrt{1-s}}\int_{\gamma_1}\,dz
\int_{\gamma_2}\,d\omega e^{-\frac s{2(1-s)}z^2+az/2+\frac{uz}{1-s}-ay\omega}\frac 1{\omega-z}
\notag\\
&\times\left(\frac{1+2\omega/a}{1-2\omega/a}\right)^n\frac 1{(1+2z/a)^n},
\end{align}
and
\begin{equation}\label{3.5}
M_0(x,y)=\frac{a}{(2\pi i)^2}\int_{\gamma_1}\,d\zeta\int_{\gamma_2}\,d\omega
\frac{e^{ax\zeta-ay\omega}}{\zeta-\omega}\left(\frac{1+2\omega/a}{1-2\omega/a}\right)^n
\left(\frac{1-2\zeta/a}{1+2\zeta/a}\right)^n.
\end{equation}
Here $\gamma_1$ can be taken to be a circle of radius $<a/2$ around $-a/2$ and $\gamma_2$
a circle of radius $<a/2$ around $a/2$.

The functions $\mathcal{B}_{v,t}$, $\beta_{v,t}$, $\mathcal{C}_{u,s}$ and $M_0$ can all
be expressed in terms of Hermite and Laguerre polynomials. This means that we can use
known asymptotic results for these polynomials in our asymptotic analysis. It would also
be possible to use the above formulas directly in a saddle-point analysis. In (\ref{3.2}) we make the change of variables
$\zeta\to-a\zeta/2$ and $w\to aw/2$. Let $D_1$ be a circle with radius $<1$ around $1$.
Then,
\begin{align}\label{3.6}
\mathcal{B}_{v,t}(x)&=\frac{a^{3/2}d_2}{2(2\pi i)^2\sqrt{1-t}}\int_{\Gamma_0}\,dw
\int_{D_1}\,d\zeta e^{\frac {a^2t}{8(1-t)}w^2-a^2w/4-\frac{avw}{2(1-t)}-a^2x\zeta/2}\frac 1{\zeta+w}
\notag\\
&\times\left(\frac{1+\zeta}{1-\zeta}\right)^n(1+w)^n.
\end{align}
In (\ref{3.3}) we make the change of variables $w\to -aw/2$, which gives
\begin{equation}\label{3.7}
\beta_{v,t}(x)=\frac{a^{3/2}d_2}{(4\pi i\sqrt{1-t})}\int_{\Gamma_0}\,dw
e^{\frac {a^2t}{8(1-t)}w^2+\left(\frac {a^2}4+\frac{av}{2(1-t)}-\frac{a^2x}2\right)w}(1+w)^n.
\end{equation}
Furthermore, in (\ref{3.4}) we make the change of 
variables $\omega\to a\omega/2$, $z\to az/2$. Let $D_{-1}$ be a circle around
$-1$ with radius $<1$. Then,
\begin{align}\label{3.8}
\mathcal{C}_{u,s}(y)&=\frac{a^{3/2}d_1}{2(2\pi i)^2\sqrt{1-s}}\int_{D_{-1}}\,dz
\int_{D_1}\,d\omega e^{-\frac {a^2s}{8(1-s)}z^2+a^2z/4+\frac{auz}{2(1-s)}-a^2y\omega/2}\frac 1{\omega-z}
\notag\\
&\times\left(\frac{1+\omega}{1-\omega}\right)^n\frac 1{(1+z)^n},
\end{align}

Finally, in (\ref{3.5}), we make the change of variables $\omega\to a\omega/2$, $\zeta\to -a\zeta/2$, which gives
\begin{equation}\label{3.9}
M_0(x,y)=\frac{a^2}{2(2\pi i)^2}\int_{D_1}\,d\zeta\int_{D_1}\,d\omega\frac{e^{-a^2(x\zeta+y\omega)/2}}
{\zeta+\omega}\left(\frac{1+\omega}{1-\omega}\right)^n
\left(\frac{1+\zeta}{1-\zeta}\right)^n.
\end{equation}
We will use the following formulas for the Hermite and Laguerre polynomials. Let $h_n(x)$ denote the normalized Hermite
polynomials w.r.t. the weight $e^{-x^2}$ on $\mathbb{R}$. Then
\begin{equation}\label{3.10}
\frac 1{2\pi i}\int_{\Gamma_0}(w+1)^ne^{Aw^2-2Bw}\,dw=\frac{\sqrt{n!}}{\pi^{1/4}\sqrt{2}}
\frac {e^{-B^2/A}}{(\sqrt{2A})^{n+1}}h_n(\frac B{\sqrt{A}}+\sqrt{A}),
\end{equation}
and
\begin{equation}\label{3.11}
\frac 1{2\pi i}\int_{D_{-1}}\frac{e^{-Az^2+2Bz}}{(z+1)^n}\,dz
=\frac{\pi^{1/4}(\sqrt{2A})^{n-1}}{\sqrt{(n-1)!}}e^{-A-2B}h_{n-1}(\frac B{\sqrt{A}}+\sqrt{A}).
\end{equation}
These formulas follow from the classical integral representations of the Hermite polynomials. Let
$\ell_n^1(x)$ denote the normalized Laguerre polynomial with weight $xe^{-x}$ on $[0,\infty)$ and with 
positive leading coefficient. Then,
\begin{equation}\label{3.12}
\frac 1{2\pi i}\int_{D_1}e^{-x\zeta/2}\left(\frac{1+\zeta}{1-\zeta}\right)^n\,d\zeta=-2\sqrt{n}e^{-x/2}\ell_{n-1}^1(x),
\end{equation}
as can be derived from the contour integral formula for Laguerre polynomials. In (\ref{3.6}) we can write
\begin{equation}
\frac 1{\zeta+w}=\int_0^\infty e^{-\lambda(\zeta+w)}\,d\lambda,
\notag
\end{equation}
since $\re (\zeta+w)>0$. Hence, by (\ref{3.10}) and (\ref{3.12}),
\begin{align}\label{3.13}
&\mathcal{B}_{v,t}(x)=\frac{a^{3/2}d_2}{2\sqrt{1-t}}\int_0^\infty\left(
\frac 1{2\pi i}\int_{\Gamma_0}(w+1)^ne^{\frac {a^2t}{8(1-t)}w^2-2(a^2/8+\frac{av}{4(1-t)}+\lambda/2)w}\,dw\right)
\notag\\
&\times\left(\frac 1{2\pi i}\int_{D_1}e^{-(a^2x+2\lambda)\zeta/2}\left(\frac{1+\zeta}{1-\zeta}\right)^n\,d\zeta\right)
\,d\lambda 
\notag\\
&=-\frac{a^{3/2}d_2\sqrt{n!}\sqrt{n}}{\sqrt{2}\pi^{1/4}\sqrt{1-t}}\left(\frac{a^2t}{4(1-t)}\right)^{-(n+1)/2}
\int_0^\infty e^{-\frac{8(1-t)}{a^2t}\left(a^2/8+\frac{av}{4(1-t)}+\lambda/2\right)^2}
\notag\\
&\times h_n\left(\sqrt{\frac{8(1-t)}{a^2t}}\left(\frac{a^2}8+\frac{av}{4(1-t)}+\frac{\lambda}2\right)+\sqrt{\frac{a^2t}{8(1-t)}}\right)
e^{-a^2x/2-\lambda} \ell_{n-1}^1(a^2x+2\lambda)\,d\lambda.
\end{align}
In (\ref{3.7}) we can use (\ref{3.10}) directly and obtain
\begin{align}\label{3.14}
\beta_{v,t}(x)&=
\frac{a^{3/2}d_2\sqrt{n!}}{2\pi^{1/4}\sqrt{2(1-t)}}\left(\frac{a^2t}{4(1-t)}\right)^{-(n+1)/2}
e^{-\frac{8(1-t)}{a^2t}\left(\frac{a^2x}{4}-\frac{a^2}8-\frac{av}{4(1-t)}\right)^2}
\notag\\
&\times h_n\left(\sqrt{\frac{8(1-t)}{a^2t}}\left(\frac{a^2x}{4}-\frac{a^2}8-\frac{av}{4(1-t)}\right)
+\sqrt{\frac{a^2t}{8(1-t)}}\right).
\end{align}
Next, in (\ref{3.8}) we use (\ref{3.11}), (\ref{3.12}) and
\begin{equation}
\frac 1{\omega-z}=\int_0^\infty e^{-\lambda(\omega-z)}\,d\lambda,
\notag
\end{equation}
to get
\begin{align}\label{3.15}
&\mathcal{C}_{u,s}(y)=
-\frac{a^{3/2}d_1\sqrt{n}\pi^{1/4}}{\sqrt{(n-1)!}\sqrt{1-s}}\left(\frac{a^2s}{4(1-s)}\right)^{-(n-1)/2}
\int_0^\infty e^{-\frac{a^2s}{8(1-s)}-\frac{a^2}4-\frac{au}{2(1-s)}-\lambda}
\notag\\
&\times h_{n-1}\left(\sqrt{\frac{8(1-s)}{a^2s}}\left(\frac{a^2}8+\frac{au}{4(1-s)}
+\frac{\lambda}2\right)+\sqrt{\frac{a^2s}{8(1-s)}}\right)
e^{-a^2y/2-\lambda} \ell_{n-1}^1(a^2y+2\lambda)\,d\lambda.
\end{align}
Finally, in (\ref{3.9}) we use $(\zeta+\omega)^{-1}=\int_0^\infty \exp(-\lambda(\zeta+\omega))\,d\lambda$
to get
\begin{equation}\label{3.16}
M_0(x,y)=2a^2n\int_0^\infty e^{-(a^2x/2+\lambda)}\ell_{n-1}^1(a^2x+2\lambda)
e^{-(a^2y/2+\lambda)}\ell_{n-1}^1(a^2y+2\lambda)\,d\lambda
\end{equation}

We would now like to insert the scalings (\ref{1.17}) into (\ref{3.1}) and take the limit as $n\to\infty$. For
this we use the following asymptotic results for the Hermite and Laguerre polynomials.

\begin{lemma}\label{lem3.1}
Uniformly for $\xi$ in a compact set we have the limit
\begin{equation}\label{3.17}
n^{1/12}h_n\left(\sqrt{2n}\left(1+\frac {\xi}{n^{2/3}}\right)\right)
e^{-n(1+\xi/n^{2/3})^2}\to 2^{1/4}\Ai(2\xi)
\end{equation}
as $n\to\infty$. Also, there are constants $c,C>0$ so that
\begin{equation}\label{3.18}
\left|n^{1/12}h_n\left(\sqrt{2n}\left(1+\frac {\xi}{n^{2/3}}\right)\right)
e^{-n(1+\xi/n^{2/3})^2}\right|\le Ce^{-c\xi^{3/2}}
\end{equation}
for all $\xi\ge 0$, and
\begin{equation}\label{3.19}
\left|n^{1/12}h_n\left(\sqrt{2n}\left(1+\frac {\xi}{n^{2/3}}\right)\right)
e^{-n(1+\xi/n^{2/3})^2}\right|\le Ce^{-\xi^2/3n^{1/3}}
\end{equation}
for all $\xi\ge n^{2/3}$.
Furthermore, uniformly for $\xi$ in a compact set,
\begin{equation}\label{3.20}
n^{5/6}\ell_n^1\left(4n\left(1+\frac {\xi}{n^{2/3}}\right)\right)e^{-2n(1+\xi/n^{2/3})}\to2^{-4/3}\Ai(2^{2/3}\xi)
\end{equation}
as $n\to\infty$. There are constants $c,C>0$, so that
\begin{equation}\label{3.21}
\left|n^{5/6}\ell_n^1\left(4n\left(1+\frac {\xi}{n^{2/3}}\right)\right)e^{-2n(1+\xi/n^{2/3})}\right|
\le Ce^{-c\xi},
\end{equation}
for all $\xi\ge 0$.
\end{lemma}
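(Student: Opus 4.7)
The plan is to derive these as Plancherel--Rotach-type asymptotics via saddle-point (steepest descent) analysis of the contour integral representations (3.10)--(3.12), which are already recorded just above the lemma. All four statements are essentially classical edge asymptotics for the Hermite polynomials (at $x=\sqrt{2n}$) and the Laguerre polynomials (at $x=4n$), and the main work is to track the prefactors and to arrange the estimates to hold uniformly. For (3.17), I would choose $A=A_n$ and $B=B_n$ in (3.10) so that $B/\sqrt{A}+\sqrt{A}=\sqrt{2n}(1+\xi/n^{2/3})$, writing $(1+w)^n=e^{n\log(1+w)}$ so that the integrand takes the form $e^{n\phi(w)+\text{lower order}}$. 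The two saddle points of $\phi$ coalesce at a single edge saddle; after the rescaling $w=\zeta n^{-1/3}$ the phase expands to cubic order in $\zeta$ and the integral collapses to the standard Airy contour integral $\frac{1}{2\pi i}\int e^{\zeta^3/3-2\xi\zeta}\,d\zeta=\Ai(2\xi)$. Combined with Stirling's formula for $\sqrt{n!}$ and the explicit $A_n$-factors appearing in front of (3.10), the limit $2^{1/4}\Ai(2\xi)$ falls out, uniformly for $\xi$ in a compact set.

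For the bound (3.18), I would deform the contour to the steepest descent path through the edge saddle; on this path the real part of the phase produces the Airy-type decay $e^{-c\xi^{3/2}}$ valid for all $\xi\ge 0$. For (3.19), in the Gaussian tail regime $\xi\ge n^{2/3}$ the argument of $h_n$ is well outside the bulk, where $h_n$ grows only polynomially in $x$; the dominant factor is then the Gaussian weight $e^{-n(1+\xi/n^{2/3})^2}$, which for large $\xi/n^{2/3}$ contributes roughly $e^{-\xi^2/n^{1/3}}$, and after absorbing polynomial growth of $h_n$ the exponent can be reduced to $\xi^2/3n^{1/3}$. The Laguerre statements (3.20)--(3.21) are proved the same way starting from (3.12): the two coalescing saddles at $\zeta=1$ give, after the edge rescaling $\zeta=1-\omega n^{-1/3}$, an Airy contour integral, and the factor $2^{2/3}$ arises from the Jacobian of the substitution. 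The bound (3.21) then follows from the steepest descent argument combined with the weight $e^{-2n(1+\xi/n^{2/3})}$.

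The main obstacle is the \emph{uniformity} of these estimates rather than the pointwise limits themselves: to use the lemma in the dominated-convergence arguments of section \ref{sect3}, one needs the constants $c,C$ in (3.18), (3.19), (3.21) to be independent of $n$ and $\xi$. Classical saddle-point derivations (as in Szeg{\H o}'s monograph) give pointwise asymptotics but leave the uniformity somewhat implicit; a cleaner route is to appeal directly to the Plancherel--Rotach asymptotics obtained via the Riemann--Hilbert method of Deift and coauthors, which supply uniform error bounds over exactly the required $\xi$-ranges. Either way, the technical heart is the bookkeeping of remainder terms in the saddle-point expansion, which I would carry out by splitting the contour into a steepest descent piece through the edge saddle plus a small tail and bounding each with standard Watson-lemma estimates.
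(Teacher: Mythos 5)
Your proposal is correct, and its ``cleaner route'' variant is in fact what the paper does. The paper does not carry out a saddle-point analysis from scratch: it quotes the Riemann--Hilbert--based uniform Plancherel--Rotach asymptotics of Deift et al.\ (\cite{De99}, Theorem 2.2), in the form of three regime-specific formulas for $h_n(\sqrt{2n}\,x)e^{-nx^2}$ -- an Airy/Airy$'$ expansion valid for $1-\delta\le x\le 1$, a matching expansion for $1\le x\le 1+\delta$, and a pure exponential bound $|h_n(\sqrt{2n}\,x)e^{-nx^2}|\le Cn^{-1/4}e^{-nF(x)}$ for $x\ge 1+\delta$, with $F(x)=\int_1^x\sqrt{t^2-1}\,dt$. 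These yield (\ref{3.17}) and (\ref{3.18}) directly from standard Airy asymptotics, while (\ref{3.19}) is then obtained by explicitly bounding $nF(1+\xi/n^{2/3})\ge \tfrac{1}{2\sqrt2}\,\xi^2/n^{1/3}\ge \xi^2/(3n^{1/3})$ for $\xi\ge n^{2/3}$. Your heuristic for (\ref{3.19}) -- that $h_n$ ``grows only polynomially'' so the Gaussian weight dominates -- is loose: the controlling fact is really the superexponential decay of the $L^2$-normalized Hermite function $h_n(X)e^{-X^2/2}$ beyond $X=\sqrt{2n}$, which is exactly what the $e^{-nF(x)}$ factor encodes, and the paper's explicit lower bound for $F$ is what makes the precise constant $1/3$ drop out. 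For the Laguerre case the paper likewise cites Vanlessen's RH-based asymptotics (\cite{Va05}, Theorem 2.4) after matching parameters, rather than rederiving them by steepest descent. Your primary route, a self-contained saddle-point analysis of the contour integrals (\ref{3.10})--(\ref{3.12}), would also work, and you correctly identify the crux -- uniformity of the constants in $\xi$ and $n$ so that dominated convergence applies in section~\ref{sect3}; the paper's route simply buys that uniformity off the shelf from the RH literature instead of re-establishing it.
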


\begin{proof}
Fix $\delta>0$ small and let
\begin{equation}
F(x)=\left|\int_x^1\sqrt{|1-y^2|^2}\,dy\right|.
\notag
\end{equation}
From \cite{De99}, theorem 2.2, we have the following asymptotic formulas for the normalized Hermite polynomials
\begin{align}\label{3.21.1}
h_n(\sqrt{2n} x)e^{-nx^2}&=(2n)^{-1/4}\left\{\left(\frac{1+x}{1-x}\right)^{1/4}[3nF(x)]^{1/6}\Ai(-[3nF(x)]^{2/3})\right.
\notag\\
&-\left(\frac{1-x}{1+x}\right)^{1/4}\left.[3nF(x)]^{-1/6}\Ai'(-[3nF(x)]^{2/3})\right\}(1+\mathcal{O}(n^{-1})),
\end{align}
for $1-\delta\le x\le 1$,
\begin{align}\label{3.21.2}
h_n(\sqrt{2n} x)e^{-nx^2}&=(2n)^{-1/4}\left\{\left(\frac{x+1}{x-1}\right)^{1/4}[3nF(x)]^{1/6}\Ai([3nF(x)]^{2/3})\right.
\notag\\
&-\left(\frac{x-1}{x+1}\right)^{1/4}\left.[3nF(x)]^{-1/6}\Ai'([3nF(x)]^{2/3})\right\}(1+\mathcal{O}(n^{-1})),
\end{align}
for $1\le x\le 1+\delta$ and
\begin{equation}\label{3.21.3}
\left|h_n(\sqrt{2n} x)e^{-nx^2}\right|\le Cn^{-1/4} e^{-nF(x)}
\end{equation}
for $x\ge 1+\delta$. These asymptotic formulas and estimates together with asymptotics for the Airy function and 
its derivative can be used to prove (\ref{3.17}) to (\ref{3.19}). Let us give the proof of (\ref{3.19}).
If $\xi\ge n^{2/3}$, then $x=1+\xi/n^{2/3}\ge 2$ so we can use the estimate (\ref{3.21.3}). We obtain
\begin{align}
&\left|n^{1/12}h_n\left(\sqrt{2n}\left(1+\frac {\xi}{n^{2/3}}\right)\right)
e^{-n(1+\xi/n^{2/3})^2}\right|
\notag\\
&\le \frac C{n^{1/6}}\exp\left(-n\int_1^{1+\xi/n^{2/3}}\sqrt{t^2-1}\,dt\right)
\notag
\end{align}
for all $\xi\ge n^{2/3}$. Now,
\begin{align}
&\int_1^{1+\xi/n^{2/3}}\sqrt{t^2-1}\,dt\ge\int_{\sqrt{2}}^{1+\xi/n^{2/3}} t\sqrt{1-1/t^2}\,dt\ge\frac 1{\sqrt{2}}
\int_{\sqrt{2}}^{1+\xi/n^{2/3}} t\,dt
\notag\\
&=\frac 1{2\sqrt{2}}\left(\left(1+\frac{\xi}{n^{2/3}}\right)^2-2\right)=\frac 1{2\sqrt{2}}
\left(\frac{2\xi}{n^{2/3}}-1+\frac{\xi^2}{n^{4/3}}\right)\ge \frac 1{2\sqrt{2}}\frac{\xi^2}{n^{4/3}}
\notag
\end{align}
if $\xi\ge n^{2/3}$. This proves (\ref{3.19}). 

In \cite{Va05} formulas similar to (\ref{3.21.1}) to (\ref{3.21.3}) are given for generalized Laguerre
polynomials and specializing to the Laguerre polynomilas we are considering we can use the results of \cite{Va05}
to prove (\ref{3.20}) and (\ref{3.21}). (In the notation of \cite{Va05} we are considering the case
when $\alpha=1$, $Q(x)=x$, which gives $h_n(x)=4$, $d\mu_n(x)=2\pi^{-1}\sqrt{(1-x)/x}$, $\beta_n=4n$ and
$\psi_n(z)=2(\pi i)^{-1}(z-1)^{1/2}/z^{1/2}$. We can then use Theorem 2.4 in \cite{Va05}.)
\end{proof}

These asymptotic results and estimates can be used to prove the next lemma, which contains the essential asymptotic results
that we need.

\begin{lemma}\label{lem3.2}
Consider the scaling (\ref{1.17}) and choose $d_1,d_2$ as in (\ref{1.18}). We have the following pointwise limits,
\begin{equation}\label{3.22}
\lim_{n\to\infty} n^{-1/3}\mathcal{B}_{v,t}(1+xn^{-2/3})=-\sqrt{2}\int_0^\infty e^{2^{1/3}\tau_2\lambda}
\Ai(\xi_2+\tau_2^2+\sigma+2^{1/3}\lambda)\Ai(2^{2/3}x+\tilde{\sigma}+\lambda)\,d\lambda,
\end{equation}
\begin{equation}\label{3.23}
\lim_{n\to\infty} n^{-1/3}\beta_{v,t}(1+xn^{-2/3})=\sqrt{2}e^{2\tau_2(x-\xi_2)}
\Ai(-\xi_2+\sigma+2x+\tau_2^2),
\end{equation}
\begin{equation}\label{3.24}
\lim_{n\to\infty} n^{-1/3}\mathcal{C}_{u,s}(1+yn^{-2/3})=-\sqrt{2}\int_0^\infty e^{-2^{1/3}\tau_1\lambda}
\Ai(\xi_1+\tau_1^2+\sigma+2^{1/3}\lambda)\Ai(2^{2/3}y+\tilde{\sigma}+\lambda)\,d\lambda
\end{equation}
and
\begin{equation}\label{3.25}
\lim_{n\to\infty} n^{-2/3}M_0(1+xn^{-2/3},1+yn^{-2/3})= 2^{2/3}K_{\text{Ai}}(2^{2/3}x+\tilde{\sigma},
2^{2/3}y+\tilde{\sigma}).
\end{equation}
We also have the following estimates. There are constants $c,C>0$ so that, for all $x,y\ge 0$,
\begin{equation}\label{3.26}
\left|n^{-1/3}\mathcal{B}_{v,t}(1+xn^{-2/3})\right|\le Ce^{-cx},
\end{equation}
\begin{equation}\label{3.27}
\left|n^{-1/3}\beta_{v,t}(1+xn^{-2/3})\right|\le Ce^{-cx},
\end{equation}
\begin{equation}\label{3.28}
\left|n^{-1/3}\mathcal{C}_{u,s}(1+yn^{-2/3})\right|\le Ce^{-cy},
\end{equation}
and
\begin{equation}\label{3.29}
\left| n^{-2/3}M_0(1+xn^{-2/3},1+yn^{-2/3})\right|\le Ce^{-c(x+y)}.
\end{equation}
\end{lemma}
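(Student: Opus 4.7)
The plan is to work from the integral representations (3.13)--(3.16), which express each of $\mathcal{B}_{v,t}$, $\beta_{v,t}$, $\mathcal{C}_{u,s}$ and $M_0$ in closed form involving Hermite polynomials $h_n$, $h_{n-1}$ and the Laguerre polynomial $\ell_{n-1}^1$, and then to apply the edge asymptotics collected in Lemma~\ref{lem3.1}. The key algebraic step is to check that, under the scaling (\ref{1.17}), the arguments of $h_n$ in (\ref{3.13}), (\ref{3.14}), (\ref{3.15}) and of $\ell_{n-1}^1$ in (\ref{3.13}), (\ref{3.15}), (\ref{3.16}) can all be brought to the canonical forms $\sqrt{2n}(1+\xi/n^{2/3})$ and $4n(1+\eta/n^{2/3})$ respectively, with $\xi,\eta$ of order one once the internal integration variable $\lambda$ is rescaled as $\lambda=2^{1/3}n^{1/3}\ell$ (so that $\ell$ becomes an order-one integration variable and $d\lambda=2^{1/3}n^{1/3}\,d\ell$ supplies the Jacobian responsible for the $n^{-1/3}$ (resp.\ $n^{-2/3}$) prefactor on the left-hand side of the limits).

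First I would carry out this Taylor expansion explicitly for $\beta_{v,t}$, which involves a single Hermite polynomial and no $\lambda$-integration. Substituting (\ref{1.17}) into (\ref{3.14}) and expanding through the order $n^{-1/6}$, the Hermite argument becomes $\sqrt{2n}(1+\xi/n^{2/3})$ with $\xi=(\sigma+\tau_2^2+2x-\xi_2)/2$, and (\ref{3.17}) produces an Airy function with argument $-\xi_2+\sigma+2x+\tau_2^2$. The Gaussian prefactor $e^{-B^2/A}$ from the integral representation (\ref{3.10}), together with the factor $\sqrt{n!}/(\sqrt{2A})^{n+1}$, combines via Stirling's formula with the explicit $d_2=n^{-1/12}e^{-\tau_2(\sigma+\xi_2)-\frac{2}{3}\tau_2^3}/\sqrt{2}$ in (\ref{1.18}) to produce the clean limit $\sqrt{2}\,e^{2\tau_2(x-\xi_2)}\Ai(-\xi_2+\sigma+2x+\tau_2^2)$ of (\ref{3.23}); the exponent in $d_2$ is designed precisely to absorb the leading subdominant contributions from $e^{-B^2/A}$.

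Next I would handle $\mathcal{B}_{v,t}$, $\mathcal{C}_{u,s}$ and $M_0$ in parallel. Each has the same structure: after the rescaling $\lambda=2^{1/3}n^{1/3}\ell$ the integrand factorizes into a Hermite piece and a Laguerre piece, each converging pointwise by Lemma~\ref{lem3.1} to an Airy function, with an extra factor $e^{\pm 2^{1/3}\tau\ell}$ arising from combining the shift of the Laguerre argument by $2\lambda$ with the explicit factor $e^{-\lambda}$ in (\ref{3.13}), (\ref{3.15}) and with the symmetric structure in (\ref{3.16}). This pointwise limit produces the integral over $\ell\ge 0$ claimed in (\ref{3.22}), (\ref{3.24}), (\ref{3.25}). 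To pass to the limit inside the $\ell$-integral I would invoke dominated convergence: the bounds (\ref{3.18}), (\ref{3.19}), (\ref{3.21}) give Gaussian/exponential decay in $\ell$ uniformly in $n$, and the same bounds, evaluated instead as functions of the shifted external variables, directly yield the exponential estimates (\ref{3.26})--(\ref{3.29}).

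The main obstacle will be the bookkeeping of the exponential prefactors. Each Hermite asymptotic contributes a factor $e^{-B^2/A}$ that depends quadratically on all parameters and on $\lambda$; combined with the Stirling normalization of $\sqrt{n!}/(\sqrt{2A})^{n+1}$ (which carries implicit powers of $n$ of order $n^{1/12}$) and with the prescribed exponentials $\exp(\tau_1(\sigma+\xi_1)+\frac{2}{3}\tau_1^3)$ and $\exp(-\tau_2(\sigma+\xi_2)-\frac{2}{3}\tau_2^3)$ hidden in $d_1,d_2$, a careful three-term expansion, keeping corrections up to order $n^{-1/6}$ in $\sqrt{A}$, $\sqrt{2nA}$ and $B$, is required to verify that all exponential contributions of order $n$, $n^{2/3}$ and $n^{1/3}$ cancel and that only the bounded exponentials $e^{2\tau_2(x-\xi_2)}$, $e^{-2\tau_1 y}$ etc.\ remain. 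Once this cancellation is verified for $\mathcal{B}_{v,t}$, the analogous computations for $\mathcal{C}_{u,s}$ (reflected $s\leftrightarrow t$, $\tau_2\to-\tau_1$) and for $M_0$ (which is simpler because only Laguerre asymptotics intervene and the cancellation involves $e^{-a^2 x/2-\lambda}$ directly against the Laguerre exponential $e^{2n(1+\xi/n^{2/3})}$) proceed in the same way.
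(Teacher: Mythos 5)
Your proposal is correct and follows essentially the same route as the paper: start from the Hermite/Laguerre representations (3.13)--(3.16), rescale the internal variable $\lambda$ by $n^{1/3}$ (the paper uses $\lambda\to\lambda n^{1/3}$ and then a further $2^{1/3}$-rescaling at the end; your $\lambda=2^{1/3}n^{1/3}\ell$ combines the two), apply the edge asymptotics and tail bounds of Lemma~\ref{lem3.1}, and pass to the limit by dominated convergence, verifying that the exponential corrections of order $n$, $n^{2/3}$, $n^{1/3}$ from $e^{-B^2/A}$, Stirling, and $d_1d_2$ cancel. The only cosmetic difference is that the paper packages the exponential bookkeeping into the auxiliary quantities $X_n$, $Y_n$, $Z_n$, $W_n$ and reuses them to read off $\beta_{v,t}$ from the $\lambda=0$ evaluation (via $\beta_{-v,t}$), whereas you propose a fresh direct expansion for $\beta_{v,t}$; both give the same result.
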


\begin{proof} Set
\begin{align}\label{3.30}
X_n(\lambda)&=\sqrt{\frac{8(1-t)}{a^2t}}\left(\frac{a^2}8+\frac{av}{4(1-t)}+\frac {\lambda}2\right)+\sqrt{\frac{a^2t}{8(1-t)}}
\notag\\
&=\sqrt{2n}\left(1+\frac a{2\sqrt{n}}\frac{\sqrt{t}+\sqrt{1-t}}{\sqrt{4(1-t)}}-1+\frac v{2\sqrt{n}}
\sqrt{\frac{1-t}t}+\frac{\lambda}{a\sqrt{n}}\sqrt{\frac{1-t}t}\right)
\end{align}
and
\begin{align}\label{3.31}
Y_n(\lambda)&=-\frac{8(1-t)}{a^2t}\left(\frac{a^2}8+\frac{av}{4(1-t)}+\frac {\lambda}2\right)^2+\frac 12X_n(\lambda)^2
\notag\\
&=\frac {a^2}8+\frac{a^2(2t-1)}{16t(1-t)}+\left(\frac{2t-1}{2t}-\frac v{at}\right)\lambda-\frac{(1-t)\lambda^2}{a^2t}
-\frac{v^2}{4t(1-t)}+\frac{av(2t-1)}{4t(1-t)}.
\end{align}
Also set
\begin{equation}\label{3.32}
Z_n=\frac{\sqrt{n!}n^{1/3}}{\pi^{1/4}\sqrt{2}}\left(\frac{a^2t}{4(1-t)}\right)^{-(n+1)/2}.
\end{equation}
Furthermore, set
\begin{equation}\label{3.33}
W_n(\lambda)=a^2(1+xn^{-2/3})+2\lambda=4n\left[1+\frac{a^2}{4n}-1+\frac{a^2}{4n}xn^{-2/3}+\frac{\lambda}{2n}\right]
\end{equation}
Then, by (\ref{3.13}),
\begin{align}\label{3.34}
&n^{-1/3}\mathcal{B}_{v,t}(1+xn^{-2/3})
\notag\\
&=-\frac{a^{3/2}d_2}{n\sqrt{1-t}}\int_0^\infty Z_ne^{Y_n(\lambda)-X_n(\lambda)^2/2}
h_n(X_n(\lambda))n^{5/6}e^{-W_n(\lambda)/2}\ell_{n-1}^1(W_n(\lambda))\,d\lambda.
\end{align}
Now, using (\ref{3.17}), (\ref{3.30}), (\ref{3.31}), (\ref{3.32}) and Stirling's formula a somewhat 
lengthy but straightforward computation shows that
\begin{equation}\label{3.35}
Z_ne^{Y_n(\lambda n^{1/3})-X_n(\lambda n^{1/3})^2/2}h_n(X_n(\lambda n^{1/3}))\to
e^{\tau_2(\xi_2+\sigma+\lambda)+2\tau_2^3/3}\Ai(\xi_2+\tau_2^2+\sigma+\lambda)
\end{equation}
pointwise as $n\to\infty$. Also, from (\ref{3.30}), (\ref{3.31}), (\ref{3.32}) and  (\ref{3.18}) it follows 
that there are constants $c,C>0$ so that
\begin{equation}\label{3.36}
\left|Z_ne^{Y_n(\lambda n^{1/3})-X_n(\lambda n^{1/3})^2/2}h_n(X_n(\lambda n^{1/3}))\right|
\le Ce^{-c\lambda^{3/2}}
\end{equation}
for all $\lambda\ge 0$. From (\ref{3.20}) and (\ref{3.33}) it follows that
\begin{equation}\label{3.37}
n^{5/6}e^{-W_n(\lambda n^{1/3})/2}\ell_{n-1}^1(W_n(\lambda n^{1/3}))\to 2^{-4/3}\Ai(2^{2/3}(x+\sigma+\lambda/2))
\end{equation}
pointwise as $n\to\infty$. Furthermore it follows from (\ref{3.21}) and (\ref{3.33}) that
there are constants $c,C>0$ so that
\begin{equation}\label{3.38}
\left|n^{5/6}e^{-W_n(\lambda n^{1/3})/2}\ell_{n-1}^1(W_n(\lambda n^{1/3}))\right|
\le Ce^{-c(x+\lambda)}.
\end{equation}
We now make the change of variables $\lambda\to\lambda n^{1/3}$ in (\ref{3.34}) and use (\ref{1.17}), (\ref{1.19})
to get
\begin{align}\label{3.39}
&n^{-1/3}\mathcal{B}_{v,t}(1+xn^{-2/3})=-\frac{(2+\sigma n^{-2/3})^{3/2}e^{-\tau_2(\xi_2+\sigma)-2\tau_2^3/3}}
{\sqrt{2(1-t)}}
\notag\\
&\times \int_0^\infty Z_ne^{Y_n(\lambda n^{1/3})-X_n(\lambda n^{1/3})^2/2}
h_n(X_n(\lambda n^{1/3}))n^{5/6}e^{-W_n(\lambda n^{1/3})/2}\ell_{n-1}^1(W_n(\lambda n^{1/3}))\,d\lambda.
\end{align}
Using (\ref{3.35}) to (\ref{3.38}) and the dominated convergence theorem we see that
\begin{align}
&n^{-1/3}\mathcal{B}_{v,t}(1+xn^{-2/3})\to -2^{3/2-4/3}\int_0^\infty e^{\tau_2\lambda}
\Ai(\xi_2+\tau_2^2+\sigma+\lambda)\Ai(2^{2/3}(x+\sigma+\lambda/2))\,d\lambda
\notag\\
&=-\sqrt{2}\int_0^\infty e^{2^{1/3}\tau_2\lambda}\Ai(\xi_2+\tau_2^2+\sigma+2^{1/3}\lambda)\Ai(2^{2/3}x+\tilde{\sigma}+\lambda)\,d\lambda,
\notag
\end{align}
which proves (\ref{3.22}). If we use the estimates (\ref{3.36}) and (\ref{3.38}) in (\ref{3.39}) we
obtain the estimate (\ref{3.26}).

Consider now $\beta_{v,t}(x)$. Note that
\begin{equation}
\frac{a^2(1+xn^{-2/3})}4-\frac{a^2}8-\frac{av}{4(1-t)}=\frac{a^2}8-\frac{av}{4(1-t)}+\frac{a^2}4 xn^{-2/3}+o(n^{-2/3}).
\notag
\end{equation}
If we set $\tilde{\lambda}=a^2x/2n+o(n^{-1})$, then we see from (\ref{3.14}) that
\begin{equation}\label{3.40}
n^{-1/3}\beta_{-v,t}(1+xn^{-2/3})=\frac{a^{3/2}d_2n^{-2/3}}{2\sqrt{1-t}} Z_ne^{Y_n(\tilde{\lambda}n^{1/3})-
X_n(\tilde{\lambda}n^{1/3})^2/2}h_n(X_n(\tilde{\lambda}n^{1/3})).
\end{equation}
We can now use (\ref{3.35}) and the definition of $d_2$ to get the following limit. Note that changing $v$
to $-v$ corresponds to changing $\xi_2$ to $-\xi_2$ in (\ref{3.35}). This gives
\begin{equation}
\lim_{n\to\infty} n^{-1/3}\beta_{v,t}(1+xn^{-2/3})=
\sqrt{2} e^{-2\tau_2\xi_2+2\tau_2x}\Ai(2x-\xi_2+\sigma+\tau_2^2),
\notag
\end{equation}
which proves (\ref{3.23}). The estimate (\ref{3.27}) follows from
(\ref{3.36}) and (\ref{3.40}).

Next, we turn to $\mathcal{C}_{u,s}$. We proceed similarly to the analysis of $\mathcal{B}_{v,t}$. Set
\begin{align}\label{3.41}
&\hat{X}_n(\lambda)=\sqrt{\frac{8(1-s)}{a^2s}}\left(\frac{a^2}8+\frac{au}{4(1-s)}+\frac{\lambda}2\right)+
\sqrt{\frac{a^2s}{8(1-s)}}
\notag\\
&=\sqrt{2(n-1)}\left[1+\frac{a}{2\sqrt{n-1}}\frac{\sqrt{s}+\sqrt{1-s}}{\sqrt{4(1-s)}}-1+
\frac{u}{2\sqrt{n-1}}\sqrt{\frac{1-s}s}+\frac{\lambda}{a\sqrt{n-1}}\sqrt{\frac{1-s}s}\right],
\end{align}
\begin{align}\label{3.42}
&\hat{Y}_n(\lambda)=-\frac{a^2s}{8(1-s)}-\frac{a^2}4-\frac{au}{2(1-s)}
\notag\\
&+\frac 12
\left[\sqrt{\frac{8(1-s)}{a^2s}}\left(\frac{a^2}8+\frac{au}{4(1-s)}+\frac{\lambda}2\right)+
\sqrt{\frac{a^2s}{8(1-s)}}\right]^2
\notag\\
&=-\frac{a^2}8-\frac{a^2(2s-1)}{16s(1-s)}-\left(\frac{2s-1}{2s}-\frac u{as}\right)\lambda
-\frac{au(2s-1)}{4s(1-s)}+\frac{u^2}{4s(1-s)}+\frac{(1-s)\lambda^2}{a^2s}
\end{align}
and
\begin{equation}\label{3.43}
\hat{Z}_n=\left(\frac{a^2s}{4(1-s)}\right)^{(n-1)/2}\frac{\pi^{1/4}n^{1/3}}{\sqrt{(n-1)!}}.
\end{equation}
Now, similarly to (\ref{3.35}) and using (\ref{3.17}), we obtain
\begin{equation}\label{3.44}
\hat{Z}_ne^{\hat{Y}_n(\lambda n^{1/3})-\hat{X}_n(\lambda n^{1/3})^2/2}h_{n-1}(\hat{X}_n(\lambda n^{1/3}))\to
e^{-\tau_1(\xi_1+\sigma+\lambda)-2\tau_1^3/3}\Ai(\xi_1+\tau_1^2+\sigma+\lambda),
\end{equation}
pointwise as $n\to\infty$. We can also get the estimate analogous to (\ref{3.36})
\begin{equation}\label{3.45}
\left|\hat{Z}_ne^{\hat{Y}_n(\lambda n^{1/3})-\hat{X}_n(\lambda n^{1/3})^2/2}h_{n-1}(\hat{X}_n(\lambda n^{1/3}))\right|
\le Ce^{-c\lambda^{3/2}}
\end{equation}
for some constants $c,C>0$. The added difficulty in proving this compared to (\ref{3.36}) is that we now have a factor
$\exp((1-s)\lambda^2n^{2/3}/a^2s)$ instead of $\exp(-(1-t)\lambda^2n^{2/3}/a^2t)$. For large $\lambda$ we cannot ignore
this factor. We have that $(1-s)\lambda^2n^{2/3}/a^2s\sim\lambda^2/4n^{1/3}$ for large $n$. Hence we can use (\ref{3.19})
instead of (\ref{3.18}) when $\lambda\ge c_0n^{2/3}$ with an appropriate $c_0$.

Now, by (\ref{3.15}) and (\ref{3.41}) to (\ref{3.43}),
\begin{align}
&n^{-1/3}\mathcal{C}_{u,s}(1+yn^{-2/3})
\notag\\
&=-\frac{a^{3/2}d_1n^{-2/3}}{\sqrt{1-s}}\int_0^\infty 
\hat{Z}_ne^{\hat{Y}_n(\lambda n^{1/3})-\hat{X}_n(\lambda n^{1/3})^2/2}h_{n-1}(\hat{X}_n(\lambda n^{1/3}))
\notag\\
&\times n^{5/6}e^{-W_n(\lambda n^{1/3})/2}\ell_{n-1}(W_n(\lambda n^{1/3}))\,d\lambda.
\end{align}

It follows from (\ref{3.37}), (\ref{3.38}),(\ref{3.44}), (\ref{3.45}) and the dominated convergence theorem that
\begin{equation}
\lim_{n\to\infty}n^{-1/3}\mathcal{C}_{u,s}(1+yn^{-2/3})=
-\sqrt{2}\int_0^\infty e^{-2^{1/3}\tau_1\lambda}\Ai(\xi_1+\tau_1^2+\sigma+2^{1/3}\lambda)\Ai(2^{2/3}y+\tilde{\sigma}
+\lambda)\,d\lambda,
\notag
\end{equation}
pointwise.
This proves (\ref{3.24}). The estimate (\ref{3.28})
follows from (\ref{3.38}) and (\ref{3.45}).

It remains to consider $M_0$. Here, we note that by (\ref{3.16}) and (\ref{3.33}),
\begin{align}
&n^{-2/3}M_0(1+xn^{-2/3},1+yn^{-2/3})=\frac{2a^2}n\int_0^\infty
n^{5/6}e^{-W_n(\lambda n^{1/3})/2}\ell_{n-1}(W_n(\lambda n^{1/3}))
\notag\\
&\times n^{5/6}e^{-\hat{W}_n(\lambda n^{1/3})/2}\ell_{n-1}(\hat{W}_n(\lambda n^{1/3}))\,d\lambda,
\notag
\end{align}
where $\hat{W}_n$ is the same as $W_n$ but with $x$ replaced by $y$. The limit (\ref{3.25})
and the estimate (\ref{3.29}) now follow from (\ref{3.37}), (\ref{3.38}) and the dominated
convergence theorem.
\end{proof}

We are now ready for the proof of theorem \ref{thm1.3}.

\begin{proof} (\it of theorem \ref{thm1.3}\rm). Note that by (\ref{1.16}) it is enough to show that
\begin{equation}\label{3.46}
\lim_{n\to\infty} d_1d_2 L_n(s,u,t,v)=L_{\text{tac}}(\tau_1,\xi_1,\tau_2,\xi_2)
\end{equation}
and
\begin{equation}\label{3.47}
\lim_{n\to\infty} d_1d_2 q(s,u,t,v)=p(\tau_1,\xi_1,\tau_2,\xi_2)
\end{equation}
under the scaling limit (\ref{1.17}), (\ref{1.18}). If we accept these limits we can complete the proof.
Set
\begin{align}
\hat{d}_1&=\frac{n^{-1/12}}{\sqrt{2}}e^{\tau_1(\sigma-\xi_1)+\frac 23\tau_1^3}
\notag\\
\hat{d}_2&=\frac{n^{-1/12}}{\sqrt{2}}e^{-\tau_2(\sigma-\xi_2)-\frac 23\tau_2^3}.
\notag
\end{align}
Then, by (\ref{3.46}), 
\begin{equation}
\lim_{n\to\infty}\hat{d}_1\hat{d}_2 L_n(s,-u,t,-v)=L_{\text{tac}}(\tau_1,-\xi_1,\tau_2,-\xi_2).
\notag
\end{equation}
By (\ref{1.14}) and (\ref{1.16}),
\begin{align}
&d_1d_2\mathcal{L}_{n}(s,u,t,v)
\notag\\
&=d_1d_2L_n(s,u,t,v)+e^{2\tau_1\xi_1-2\tau_2\xi_2}\hat{d}_1\hat{d}_2L_n(s,-u,t,-v)
+d_1d_2q(s,u,t,t,v)
\notag
\end{align}
and (\ref{1.27}) follows.

It is straightforward to show (\ref{3.47}) from the
definitions of $d_1$, $d_2$, $q$ and $p$, so we omit the proof. To prove (\ref{3.46}) we will use (\ref{3.1}).
Consider the first integral in (\ref{3.1}). This is actually an extended Hermite kernel so we could use known results
but since we have all the ingredients we give a proof.
In this integral we make the change of variables $z\to az/2$, $w\to aw/2$, which gives
\begin{align}\label{3.48}
&\frac{d_1d_2a}{2(2\pi i)^2\sqrt{(1-s)(1-t)}}\int_{\Gamma_0}\,dw\int_{D_{-1}}\,dz\frac 1{w-z}
e^{-\frac{a^2s}{8(1-s)}z^2+\frac{a^2}4 z+\frac{auz}{2(1-s)}}
\notag\\
&\times e^{\frac{a^2t}{8(1-t)}w^2-\frac{a^2}4 w-\frac{auw}{2(1-t)}} \left(\frac{1+w}{1+z}\right)^n
\notag\\
&=\frac{d_1d_2an^{-1/3}}{2\sqrt{(1-s)(1-t)}}\int_0^\infty Z_ne^{Y_n(\lambda n^{1/3})-X_n(\lambda n^{1/3})^2/2}
h_n(X_n(\lambda n^{1/3}))
\notag\\
&\times\hat{Z}_ne^{\hat{Y}_n(\lambda n^{1/3})-\hat{X}_n(\lambda n^{1/3})^2/2}h_{n-1}(\hat{X}_n(\lambda n^{1/3}))\,d\lambda.
\end{align}
Here we have have used (\ref{3.10}), (\ref{3.11}) and the notation in the proof of lemma \ref{lem3.2}. We can now use
(\ref{1.17}), (\ref{1.18}), (\ref{3.35}), (\ref{3.36}), (\ref{3.44}), (\ref{3.45}) and the dominated convergence theorem to see
that the last expression in (\ref{3.48}) converges to
\begin{equation}\label{3.49}
\int_0^\infty e^{\lambda(\tau_2-\tau_1)}\Ai(\xi_1+\tau_1^2+\sigma+\lambda)\Ai(\xi_2+\tau_2^2+\sigma+\lambda)\,d\lambda
=\tilde{A}(\tau_1,\xi_1+\tau_1^2+\sigma,\tau_2,\xi_2+\tau_2^2+\sigma).
\end{equation}
It remains to show that
\begin{align}\label{3.50}
&\frac{\det(I-M_0+(\mathcal{B}_{v,t}+\beta_{v,t})\otimes\mathcal{C}_{u,s})_{L^2[1,\infty)}}
{\det(I-M_0)_{L^2[1,\infty)}}
\notag\\
&\to\frac{1}{F_2(\tilde{\sigma})}\det(I-K_{\text{Ai}}+(B_{\xi_2+\sigma,\tau_2}-\beta_{\xi_2+\sigma,\tau_2})\otimes
B_{\xi_1+\sigma,-\tau_1})_{L^2[\tilde{\sigma},\infty)}
\end{align}
as $n\to\infty$. Write 
\begin{equation}
\mathcal{D}_{v,t}(x,y)=M_0(x,y)-(\mathcal{B}_{v,t}(x)+\beta_{v,t}(x))\mathcal{C}_{u,s}(y)
\notag
\end{equation}
and consider the Fredholm expansion of the numerator in the left hand side of (\ref{3.50}),
\begin{equation}
\sum_{m=0}^\infty\frac{(-1)^m}{m!}\int_{[1,\infty)^m}\det(\mathcal{D}_{v,t}(\rho_i,\rho_j))_{m\times m}\,d^m\rho.
\notag
\end{equation}
Here we make the change of variables $\rho_j=1+x_jn^{-2/3}$ to get
\begin{equation}
\sum_{m=0}^\infty\frac{(-1)^m}{m!}\int_{[0,\infty)^m}\det(n^{-2/3}\mathcal{D}_{v,t}(1+x_in^{-2/3},1+x_jn^{-2/3}))_{m\times m}\,d^m x.
\notag
\end{equation}
Using lemma \ref{lem3.2}, Hadamard's inequality and the dominated convergence theorem we see that, as $n\to\infty$,
this converges to
\begin{equation}\label{3.51}
\sum_{m=0}^\infty\frac{(-1)^m}{m!}\int_{[0,\infty)^m}\det(D(x_i,x_j))_{m\times m}\,d^m x,
\end{equation}
where
\begin{align}\label{3.52}
D(x,y)&=2^{2/3} K_{\text{Ai}}(2^{2/3}x+\tilde{\sigma},2^{2/3}y+\tilde{\sigma})
\notag\\
&+\left(\sqrt{2}\int_0^\infty e^{2^{1/3}\tau_2\lambda}\Ai(\xi_2+\tau_2^2+\sigma+2^{1/3}\lambda)
\Ai(2^{2/3}x+\tilde{\sigma}+\lambda)\,d\lambda
\right.
\notag\\
&-\left.\sqrt{2}e^{-2\tau_2\xi_2+2\tau_2x}\Ai(-\xi_2+\sigma+\tau_2^2+2x)\right)
\notag\\
&\times\int_0^\infty e^{-2^{1/3}\tau_1\lambda}\Ai(\xi_1+\tau_1^2+\sigma+2^{1/3}\lambda)\Ai(2^{2/3}y+\tilde{\sigma}+\lambda)\,d\lambda.
\end{align}
If we make the change of variables $x_i=2^{-2/3}(y_i-\tilde{\sigma})$ in (\ref{3.51}) we see that (\ref{3.51}) is the Fredholm 
expansion of
\begin{equation}
\det(I-K_{\text{Ai}}+(B_{\xi_2+\sigma,\tau_2}-\beta_{\xi_2+\sigma,\tau_2})\otimes
B_{\xi_1+\sigma,-\tau_1})_{L^2[\tilde{\sigma},\infty)}.
\notag
\end{equation}
A similar argument shows that
\begin{equation}
\det(I-M_0)_{L^2[1,\infty)}\to F_2(\tilde{\sigma})
\notag
\end{equation}
as $n\to\infty$. This completes the proof of theorem \ref{thm1.3}.
\end{proof}

We turn now to the proof of the alternative form of $L_{\text{tac}}$, i.e. proposition \ref{prop1.4}.

\begin{proof} (\it of proposition \ref{prop1.4})\rm. All operators are operators on the space $L^2[\tilde{\sigma},\infty)$.
We have that
\begin{align}\label{3.53}
&\frac 1{F_2(\tilde{\sigma})}\det(I-K_{\text{Ai}}+(B_{\xi_2+\sigma,\tau_2}-\beta_{\xi_2+\sigma,\tau_2})\otimes
B_{\xi_1+\sigma,-\tau_1})_{L^2[\tilde{\sigma},\infty)}-1
\notag\\
&=\int_{\tilde{\sigma}}^\infty (B_{\xi_2+\sigma,\tau_2}(x)-\beta_{\xi_2+\sigma,\tau_2}(x))B_{\xi_1+\sigma,-\tau_1}(x)\,dx
\notag\\
&+\int_{\tilde{\sigma}}^\infty\int_{\tilde{\sigma}}^\infty R(x,y)
(B_{\xi_2+\sigma,\tau_2}(y)-\beta_{\xi_2+\sigma,\tau_2}(y))B_{\xi_1+\sigma,-\tau_1}(x)\,dxdy.
\end{align}
Let $T(x,y)=\Ai(x+y-\tilde{\sigma})$ as a kernel on $L^2[\tilde{\sigma},\infty)$. Then $K_{\text{Ai}}=T^2$ and
\begin{equation}
R=\sum_{r=1}^\infty T^{2r}.
\notag
\end{equation}
Set
\begin{equation}
S_{\xi,\tau}(x)=2^{1/6}e^{-2\sigma\tau+2^{1/3}\tau x}\Ai(\xi+\tau^2-\sigma+2^{1/3}x).
\notag
\end{equation}
Then,
\begin{equation}
B_{\xi+\sigma,\tau}(x)=\int_{\tilde{\sigma}}^\infty T(x,y)S_{\xi,\tau}(y)\,dy=\int_{\tilde{\sigma}}^\infty S_{\xi,\tau}(y)T(y,x)\,dy.
\notag
\end{equation}
Write $\tilde{S}_{\xi,\tau}(x)=b_{\xi+\sigma,\tau}(x)$. The last expression in (\ref{3.53}) can then be 
written
\begin{align}
&\int_{\tilde{\sigma}}^\infty(TS_{\xi_2,\tau_2}(x)-\tilde{S}_{\xi_2,\tau_2}(x))TS_{\xi_1,-\tau_1}(x)\,dx
\notag\\
&+\int_{\tilde{\sigma}}^\infty\int_{\tilde{\sigma}}^\infty
\sum_{r=1}^\infty T^{2r}(x,y)(TS_{\xi_2,\tau_2}(y)-\tilde{S}_{\xi_2,\tau_2}(y))TS_{\xi_1,-\tau_1}(x)\,dxdy
\notag\\
&=\int_{\tilde{\sigma}}^\infty S_{\xi_2,\tau_2}(x)(T^2S_{\xi_1,-\tau_1})(x)\,dx-
\int_{\tilde{\sigma}}^\infty \tilde{S}_{\xi_2,\tau_2}(x)(TS_{\xi_1,-\tau_1})(x)\,dx
\notag\\
&+\sum_{r=1}^\infty\int_{\tilde{\sigma}}^\infty S_{\xi_2,\tau_2}(x)(T^{2r+2}S_{\xi_1,-\tau_1})(x)\,dx
-\sum_{r=1}^\infty\int_{\tilde{\sigma}}^\infty \tilde{S}_{\xi_2,\tau_2}(x)(T^{2r+1}S_{\xi_1,-\tau_1})(x)\,dx
\notag\\
&=\int_{\tilde{\sigma}}^\infty\int_{\tilde{\sigma}}^\infty S_{\xi_2,\tau_2}(x)R(x,y)S_{\xi_1,-\tau_1}(y)\,dxdy
\notag\\
&-\int_{\tilde{\sigma}}^\infty\int_{\tilde{\sigma}}^\infty \tilde{S}_{\xi_2,\tau_2}(x)T(x,y)S_{\xi_1,-\tau_1}(y)\,dxdy
\notag\\
&-\int_{\tilde{\sigma}}^\infty\int_{\tilde{\sigma}}^\infty\int_{\tilde{\sigma}}^\infty 
\tilde{S}_{\xi_2,\tau_2}(x)R(x,y)T(y,z)S_{\xi_1,-\tau_1}(y)\,dxdydz.
\notag
\end{align}
If we add $\tilde{A}(\tau_1,\xi_1+\tau_1^2+\sigma,\tau_2,\xi_2+\tau_2^2+\sigma)$ to this we get exactly the right hand side of (\ref{1.29}).
\end{proof}

\section{Auxiliary results}\label{sect4}

In this section we will prove some results used in the previous sections.

\begin{lemma}\label{lem4.1}

\begin{itemize}
\item[(a)] The kernel $M^{(z,w)}$ given by (\ref{2.40}) and the kernel $\hat{M}^{(z,w)}$ defined by (\ref{2.53})
define finite rank operators on $L^2[1,\infty)$.
\item[(b)] We have that $\det(I-M_0)_{L^2[1,\infty)}>0$ and
$\det(I-\hat{M}_0)_{L^2[1,\infty)}>0$, where $M_0$ is given by (\ref{1.7}) and $\hat{M}_0$ is given
by (\ref{1.12.6}).
\item[(c)] For $z,w$ in a compact subset of the left half plane, there are constants $C,\epsilon>0$ so that
\begin{equation}
|M^{(z,w)}(x,y)|\le Ce^{-\epsilon (x+y)},
\notag
\end{equation}
and the analogous statement holds for $\hat{M}^{(z,w)}$ with $z,w$ in a compact subset
of the right half plane.
\end{itemize}
\end{lemma}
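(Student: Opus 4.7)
For parts (a) and (c), the plan is to exploit the double contour representations directly. Taking the $\omega$-integral first in (\ref{2.40}), the integrand has simple poles inside $\gamma_2$ only at $\omega=\nu_{n+j}$, $j=1,\ldots,m$, arising from $1/H_{m,n}(\omega)$; the other candidate singularities at $\omega=\zeta$ and $\omega=z$ both lie in the left half plane, hence outside $\gamma_2$. Collecting the $m$ residues gives
\begin{equation*}
M^{(z,w)}(x,y)=\sum_{j=1}^{m}\phi_j(x)\,e^{-ay\nu_{n+j}},
\end{equation*}
where each $\phi_j(x)$ is a single $\zeta$-integral over $\gamma_{1,w}$ with integrand proportional to $e^{ax\zeta}$ and otherwise bounded. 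Since $\gamma_{1,w}$ stays at distance at least $\epsilon$ from the imaginary axis, $|\phi_j(x)|\le Ce^{-\epsilon ax}$, and $e^{-ay\nu_{n+j}}\in L^2[1,\infty)$ because $\nu_{n+j}>0$; hence $M^{(z,w)}$ has rank at most $m$. The analogous residue computation applied to (\ref{2.53}) handles $\hat{M}^{(z,w)}$. For (c), I would fix the contours once and for all, independent of $(z,w)$ in the compact set $E$: enlarge $\gamma_{1,w}$ to a single contour enclosing $E\cup\{\nu_1,\ldots,\nu_n\}$ in $\{\re\zeta\le -\epsilon\}$, and fix $\gamma_2\subset\{\re\omega\ge\epsilon'\}$. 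The denominators $w-\zeta$, $z-\omega$, $\zeta-\omega$ then admit uniform lower bounds, the rational factors are uniformly bounded, and $|e^{ax\zeta}|\le e^{-\epsilon ax}$, $|e^{-ay\omega}|\le e^{-\epsilon' ay}$, yielding $|M^{(z,w)}(x,y)|\le Ce^{-\epsilon_0(x+y)}$.

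Part (b) is the main obstacle. My plan is to propagate positivity from the pre-limit Toeplitz determinants to the limit. Combining (\ref{2.31}) with (\ref{2.5}), $\det(I-\mathcal{K}_0)_{\bar{\ell}^2(K)}$ equals a positive multiple of the Schur polynomial $s_{\langle K^m\rangle}(e^{a\nu_1/K},\ldots,e^{a\nu_N/K})$ times $\prod_{i,j}(1-\gamma_i\delta_j)$. The Schur polynomial is positive because its arguments are positive reals, and each factor $1-\gamma_i\delta_j=1-e^{a(\nu_i-\nu_{n+j})/K}>0$ follows from $\nu_i<0<\nu_{n+j}$. Hence $\det(I-\mathcal{K}_0)_{\bar{\ell}^2(K)}>0$ for every $K$, and passing to the limit via (\ref{2.50}) yields $\det(I-M_0)_{L^2[1,\infty)}\ge 0$; the same reasoning applies to $\hat{M}_0$.

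The hardest step will be upgrading $\ge 0$ to strict positivity. My plan is to use the finite-rank representation from part (a) to rewrite $\det(I-M_0)$ as an $m\times m$ matrix determinant $\det(\delta_{ij}-\langle v_j,u_i\rangle)$, a real-analytic function of the parameters $\nu_1,\ldots,\nu_N$; combined with the nonnegativity already established, it then suffices to exhibit a single parameter configuration where the determinant is strictly positive and propagate by analytic continuation along a path that avoids the zero set. An explicit such configuration should be available when the $\nu_j$ are taken very widely separated, so $M_0$ becomes small in operator norm and $\det(I-M_0)$ is close to $1$. The main technical challenge is controlling the analytic continuation and ruling out isolated zeros encountered en route; if that proves unwieldy, a direct alternative is to estimate $\|M_0\|_{\mathrm{op}}$ from (\ref{1.7}) and show it is strictly less than $1$, which combined with the positive semidefiniteness visible in the symmetric case from (\ref{3.16}) would give the determinant positivity immediately.
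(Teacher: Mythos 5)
Your arguments for (a) and (c) are essentially sound and close in spirit to the paper's. For (a) the paper first observes from the identity $M^{(z,w)}=M_0-(w-z)\,b_1^w\otimes b_2^z$ that it suffices to treat $M_0$, and then evaluates both contour integrals in (\ref{2.40}) by residues at $(z,w)=(0,0)$, obtaining an explicit double sum $\sum_{r=1}^m\sum_{s=1}^n c_{rs}e^{ax\nu_s-ay\nu_{n+r}}$; you instead evaluate only the $\omega$-integral by residues at $\omega=\nu_{n+j}$, getting rank at most $m$ directly. Both are correct. For (c) your uniform contour-and-distance argument is exactly the kind of estimation the paper alludes to.

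Part (b) is where your proof has a real gap, and you correctly flag it yourself. Your first step is fine: by (\ref{2.5}), (\ref{2.12}) and (\ref{2.31}), $\det(I-\mathcal{K}_0)_{\bar\ell^2(K)}$ equals a strictly positive multiple of $s_{\langle K^m\rangle}(x)$ with $x_j=e^{a\nu_j/K}>0$, since each factor $1-\gamma_i\delta_j=1-e^{a(\nu_i-\nu_{n+j})/K}>0$ (here $\nu_i<0<\nu_{n+j}$). Passing to the limit via (\ref{2.50}) gives $\det(I-M_0)\ge 0$, but limits of positive quantities can vanish, so you have not proved strict positivity. Neither of your two proposed fallbacks is adequate as stated: the analytic-continuation idea cannot rule out zeros en route without further input (as you acknowledge), and the operator-norm idea both lacks a reason $\|M_0\|<1$ and relies on a positive-semidefiniteness observation from (\ref{3.16}) that holds only in the fully degenerate symmetric case, not for the general kernel (\ref{1.7}). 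The paper closes this gap by a quantitative lower bound rather than a soft argument: writing $s_{\langle K^m\rangle}(x)=\sum_T x^T$ over semistandard tableaux of shape $\langle K^m\rangle$ and using $\nu_j\ge\nu_1$ gives $x^T\ge e^{am\nu_1}$ for every $T$, hence $s_{\langle K^m\rangle}(x)\ge e^{am\nu_1}s_{\langle K^m\rangle}(1^{n+m})$; the hook-content formula gives $s_{\langle K^m\rangle}(1^{n+m})\sim cK^{mn}$ with $c>0$, and since the normalization in (\ref{2.31}) contributes the matching factor $\prod(1-\gamma_i\delta_j)\sim (a/K)^{mn}\prod(\nu_{n+j}-\nu_i)$, the limit $\lim_{K\to\infty}\det(I-\mathcal{K}_0)=\det(I-M_0)$ is bounded below by a positive constant. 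You would need to supply this (or an equivalent) lower bound to make (b) complete.
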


\begin{proof} We will prove the statements for $M^{(z,w)}$ and $M_0$. The proofs for $\hat{M}^{(z,w)}$ and $\hat{M}_0$
are analogous. 

(a) We see from (\ref{2.60}) that it is enough to prove that $M_0$ has finite rank. If we use the residue theorem in (\ref{2.40})
with $(z,w)=(0,0)$ we see that
\begin{align}
M_0(x,y)&=\sum_{r=1}^m\sum_{s=1}^n e^{ax\nu_s-ay\nu_{n+r}}\prod_{j=1}^m(\nu_s-\nu_{n+j})
\prod_{j=1, j\neq s}^n(\nu_s-\nu_j)^{-1}
\notag\\
&\times\prod_{j=1}^n(\nu_{n+r}-\nu_j)\prod_{j=1,j\neq r}^m(\nu_{n+r}-\nu_{n+j})^{-1}\frac 1{\nu_s-\nu_{n+r}}.
\notag
\end{align}
From this formula we see that $M_0$ has finite rank.

(b) It follows from (\ref{2.5}), (\ref{2.12}) and (\ref{2.31}) that
\begin{equation}
s_{<K^m>}(x)=\prod_{j=1}^m e^{a\nu_{n+j}} D_K[g_K(\zeta)]=\prod_{j=1}^m e^{a\nu_{n+j}}
\prod_{i=1}^n\prod_{j=1}^m(1-\gamma_i\delta_j)^{-1}\det(I-\mathcal{K}_0)_{\bar{\ell}^2(K)},
\notag
\end{equation}
where $x_j=\exp(a\nu_j/K)$, $\exp(a\nu_i/K)$, $\gamma_i=\exp(a\nu_i/K)$ and $\delta_j=\exp(-a\nu_{n+j}/K)$.
Note that
\begin{equation}
\prod_{i=1}^n\prod_{j=1}^m(1-\gamma_i\delta_j)^{-1}\sim \frac{K^{mn}}{a^{mn}}\prod_{i=1}^n\prod_{j=1}^m(\nu_{n+j}-\nu_i)^{-1}
\notag
\end{equation}
as $K\to\infty$. We want to show that 
\begin{equation}\label{1}
\lim_{K\to\infty} K^{-mn}s_{<K^m>}(x)>0.
\end{equation}
Let us use the combinatorial formula for the Schur polynomial
\begin{equation}
s_{<K^m>}(x)=\sum_{T\,;\,\text{sh\,}(T)=<K^m>} x^T.
\notag
\end{equation}
Since $\nu_1\le \nu_j$, $1\le j\le n+m$ we see that $x^T\ge e^{am\nu_1}$ if $\text{sh\,}(T)=<K^m>$ and hence
\begin{equation}
s_{<K^m>}(x)\ge e^{am\nu_1}s_{<K^m>}(1^{n+m}).
\notag
\end{equation}
Now,
\begin{equation}
s_\lambda(1^{n+m})=\prod_{1\le i<j\le n+m}\frac{\lambda_i-\lambda_j+j-i}{j-i}
\notag
\end{equation}
and $\ell(\lambda)=m$. Thus,
\begin{equation}
s_{<K^m>}(1^{n+m})=\prod_{i=1}^m\prod_{j=m+1}^{m+n}\frac{K+j-i}{j-i}\sim cK^{mn},
\notag
\end{equation}
as $K\to\infty$, where $c>0$. This proves (\ref{1}).

(c) The inequality follows by a direct estimation of the integral in the right hand side of (\ref{2.40}) using
the fact that $\gamma_{1,w}$ lies strictly in the open left half plane and that $\gamma_2$ lies strictly in the open
right half plane.

\end{proof}

In order to prove the estimates we need for $F(w,k)$ we will use the following lemma.

\begin{lemma}\label{lem4.2}. Let $x_j=\exp(a\nu_j/K)$ and $x_j^{(k)}=\exp(a\nu_j^{(k)}/K)$, $1\le j\le n+m$. Then,
\begin{equation}\label{2.17'}
|s_{<K^m>}(x^{(k)})|\le e^{a(|\re w|+|\nu_k|)}|s_{<K^m>}(x)|
\end{equation}
for $1\le k\le N$.
\end{lemma}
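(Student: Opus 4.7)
The plan is to expand the Schur polynomial via its combinatorial (tableau) definition and to exploit the fact that $x^{(k)}$ and $x$ differ only in the $k$-th coordinate. Since all $\nu_j$ are real, the entries $x_j=e^{a\nu_j/K}$ are positive reals; the only coordinate of $x^{(k)}$ that can be complex is $x_k^{(k)}=e^{aw/K}$.

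First, I would write
\begin{equation*}
s_{<K^m>}(x^{(k)}) = \sum_T (x^{(k)})^T,
\end{equation*}
where the sum is over all semistandard Young tableaux $T$ of rectangular shape $<K^m>$ with entries from $\{1,\dots,n+m\}$, and $(x^{(k)})^T=\prod_j (x_j^{(k)})^{m_j(T)}$, with $m_j(T)$ the multiplicity of $j$ in $T$. Since $x^{(k)}$ agrees with $x$ away from index $k$, I get the elementary identity
\begin{equation*}
(x^{(k)})^T = x^T\cdot\left(\frac{x_k^{(k)}}{x_k}\right)^{m_k(T)} = x^T\cdot e^{a(w-\nu_k)m_k(T)/K}.
\end{equation*}

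The crucial combinatorial observation is the uniform bound $0\le m_k(T)\le K$. This holds because the shape $<K^m>$ is an $m\times K$ rectangle and the semistandard condition (strict increase down columns) forces the integer $k$ to appear at most once in each of the $K$ columns. Combined with $|e^{a(w-\nu_k)/K}|=e^{a(\re w-\nu_k)/K}$, this gives
\begin{equation*}
|(x^{(k)})^T| = x^T\cdot e^{a(\re w-\nu_k)m_k(T)/K} \le x^T\cdot e^{a|\re w-\nu_k|} \le e^{a(|\re w|+|\nu_k|)}\,x^T,
\end{equation*}
using that $x^T>0$ and $|\re w-\nu_k|\le|\re w|+|\nu_k|$.

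Finally, summing over $T$ and applying the triangle inequality yields
\begin{equation*}
|s_{<K^m>}(x^{(k)})| \le \sum_T|(x^{(k)})^T| \le e^{a(|\re w|+|\nu_k|)}\sum_T x^T = e^{a(|\re w|+|\nu_k|)}|s_{<K^m>}(x)|,
\end{equation*}
which is exactly \eqref{2.17'}. The only non-routine step is the column-counting bound $m_k(T)\le K$; everything else is direct estimation. One could alternatively try to prove the bound from the Toeplitz-determinant representation \eqref{2.5}, but the tableau approach is cleaner because positivity of $x^T$ (coming from reality of the $\nu_j$) immediately allows the triangle inequality to be pushed inside the sum without any cancellation issues.
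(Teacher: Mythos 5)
Your proof is correct. The paper takes a closely related but not identical route: it invokes the one-variable branching identity
\begin{equation*}
s_{<K^m>}(y_1,\ldots,y_N)=\sum_{\mu}s_\mu(y_1,\ldots,y_{N-1})\,s_{<K^m>/\mu}(y_N),
\end{equation*}
with $y_N=e^{aw/K}$, notes that the nonvanishing skew terms force $\mu=(K,\ldots,K,r)$ so that $s_{<K^m>/\mu}(y_N)=y_N^{K-r}$ with $0\le K-r\le K$, replaces $y_N^{K-r}$ by $(y_N/z_N)^{K-r}z_N^{K-r}$ (with $z_N=e^{a\nu_k/K}$), bounds $|(y_N/z_N)^{K-r}|\le e^{a(|\re w|+|\nu_k|)}$, and then uses positivity of the remaining real factors to reassemble $s_{<K^m>}(x)$. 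Your tableau-level argument extracts exactly the same two facts, but directly from the combinatorial definition rather than through the branching rule: the column-strictness bound $m_k(T)\le K$ is the same combinatorial content as the constraint $0\le K-r\le K$ in the paper (the branching rule is nothing but a grouping of SSYT by the positions of the largest entry, and $K-r$ is the number of boxes filled with it), and positivity of each monomial weight $x^T$ plays the same role as positivity of $s_\mu\cdot s_{<K^m>/\mu}(z_N)$. In short, yours is a slightly more elementary presentation of the same estimate — it reads the bounded-exponent fact straight off the definition instead of routing it through the skew-Schur identity — and both hinge on the reality and positivity of the unperturbed arguments $x_j=e^{a\nu_j/K}$ so the triangle inequality can be pushed inside without loss.
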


\begin{proof} Let $y_1,\dots,y_N$ be $x_1^{(k)},\dots,x_N^{(k)}$ ordered so that $y_N=x_k^{(k)}=e^{aw/K}$, 
and $y_1,\dots,y_{N-1},z_N$ be $x_1,\dots,x_N$
ordered so that $z_N=x_k=e^{a\nu_k/K}$. By the symmetry of the Schur polynomial and a well known identity we have
\begin{equation}\label{2.17''}
s_{<K^m>}(x^{(k)})=s_{<K^m>}(y_1,\dots,y_N)=\sum_{\mu}s_\mu(y_1,\dots,y_{N-1})s_{<K^m>/\mu}(y_N).
\end{equation}
In order for $s_{<K^m>/\mu}(y_N)$ to be $\neq 0$ we must have $\mu=(K,\dots,K,r,0,\dots)$ with $m-1$ elements equal to $K$ and $0\le r\le K$. Hence
\begin{equation}\label{2.17:3}
s_{<K^m>/\mu}(y_N)=y_N^{K-r}=\left(\frac{y_N}{z_N}\right)^{K-r}s_{<K^m>/\mu}(z_N).
\end{equation}
Now,
\begin{equation}
\left|\left(\frac{y_N}{z_N}\right)^{K-r}\right|\le\left|e^{(K-r)a(w-\nu_k)/K}\right|=e^{a(\re w-\nu_k)(K-r)/K}\le e^{a(|\re w|+|\nu_k|)}.
\end{equation}
Inserting (\ref{2.17:3}) into (\ref{2.17''}) and using this estimate we obtain (\ref{2.17'}).
\end{proof}

We can now establish the estimates that were used in section \ref{sect2.2}. 
From (\ref{2.17'}) and (\ref{2.13}) we obtain
\begin{equation}\label{2.17:4}
|F_K(w,k)|\le  e^{a(|\re w|+|\nu_k|)}
\end{equation}
for $1\le k\le n$. By (\ref{2.13}), (\ref{2.15.2}) and (\ref{2.17'}) we obtain
\begin{equation}\label{2.17:5}
|F_K(w,k)|\le  e^{2a(|\re w|+|\nu_k|)}
\end{equation}
for $n<k\le N$.

Let us also note the following estimate. 
Using the inequality $|e^z-1|\le |z|e^{|\re z|}$ for $z\in\mathbb{C}$ we see that there is a constant $C$ independent of
$K$ such that
\begin{equation}\label{2.17:6}
\left|\prod_{j=1,j\neq k}^{n+m}\frac{e^{aw/K}-e^{a\nu_j/K}}{e^{a\nu_k/K}-e^{a\nu_j/K}}\right|\le C\prod_{j=1,j\neq k}^{n+m}|w-\nu_j|e^{a|\re w|}.
\end{equation}

Finally we give the proof of lemma \ref{lem2.2}.

\begin{proof} (\it of lemma \ref{lem2.2})\rm. We know by lemma \ref{lem4.1} that $M_0$ is finite rank 
operator so we can write
\begin{equation}
M_0=\sum_{j=1}^p \phi_j\otimes \psi_j,
\notag
\end{equation}
for some $p$, where $\phi_j,\psi_j\in L^2[1,\infty)$. Write
\begin{equation}
c_1=\int_{\Gamma_{c_1}} G_{v,t}(w)b_1^w\,dw,\quad c_2=\int_{\gamma_{1}} F_{u,s}(z)b_2^z\,dz 
\notag
\end{equation}
and let $<\,,\,>$ denote the inner product on $L^2[1,\infty)$
We have that
\begin{align}\label{4.10}
&\det(I-M_0+(w-z)b_1^w\otimes b_2^z)=\det
\left(
\begin{matrix}
 \delta_{jk}-<\phi_j,\psi_k> & <\phi_j,b_2^z>\\
 <(w-z)b_1^w,\psi_k>   &1+<(w-z)b_1^w,b_2^z>
\end{matrix}\right)
\notag\\
&=\det\left(
\begin{matrix}
 \delta_{jk}-<\phi_j,\psi_k> & 0\\
 <(w-z)b_1^w,\psi_k>   &1
\end{matrix}
\right)
+\det
\left(
\begin{matrix}
 \delta_{jk}-<\phi_j,\psi_k> & <\phi_j,b_2^z>\\
 <(w-z)b_1^w,\psi_k>   &<(w-z)b_1^w,b_2^z>
\end{matrix}
\right)
\notag\\
&=\det(I-M_0)+(w-z)
\det
\left(
\begin{matrix}
 \delta_{jk}-<\phi_j,\psi_k> & <\phi_j,b_2^z>\\
 <b_1^w,\psi_k>   &<b_1^w,b_2^z>
\end{matrix}
\right).
\end{align}
Here the determinants are of size $(p+1)\times (p+1)$.
It follows that
\begin{align}
&\int_{\gamma_1}\,dz\int_{\Gamma_{c_1}}\, dw F_{u,s}(z)G_{v,t}(w)\det
\left(
\begin{matrix}
 \delta_{jk}-<\phi_j,\psi_k> & <\phi_j,b_2^z>\\
 <b_1^w,\psi_k>   &<b_1^w,b_2^z>
\end{matrix}
\right)
\notag\\
&=\det
\left(
\begin{matrix}
 \delta_{jk}-<\phi_j,\psi_k> & <\phi_j,c_2>\\
 <c_1,\psi_k>   &<c_1,c_2>
\end{matrix}
\right)
\notag\\
&=\det
\left(
\begin{matrix}
 \delta_{jk}-<\phi_j,\psi_k> & <\phi_j,c_2>\\
 <c_1,\psi_k>   &1+<c_1,c_2>
\end{matrix}
\right)-
\det
\left(
\begin{matrix}
 \delta_{jk}-<\phi_j,\psi_k> & 0\\
 <c_1,\psi_k>   &1
\end{matrix}
\right)
\notag\\
&=\det(I-M_0+c_1\otimes c_2)-\det(I-M_0).
\notag
\end{align}

Combining this with (\ref{4.10}) proves the formula (\ref{2.64}).
\end{proof}


\end{document}